\theoremstyle{definition}
\newtheorem{definition}{Definition}[section]
\theoremstyle{plain}
\newtheorem{proposition}[definition]{Proposition}
\newtheorem{lemma}[definition]{Lemma}
\newtheorem{theorem}[definition]{Theorem}
\newtheorem{corollary}[definition]{Corollary}
\newtheorem*{theorem*}{Theorem}
\theoremstyle{remark}
\newtheorem{remark}[definition]{Remark}
\newcommand{\tr}{\textrm{tr}}
\title{Quartic differentials and harmonic maps in conformal surface geometry}
\author{Francis Burstall}
\address{(F. Burstall) Department of Mathematical Sciences, University of Bath, Bath. BA2 7AY. United Kingdom}
\email{feb@bath.ac.uk} 
\author{Emilio Musso}
\address{(E. Musso) Dipartimento di Matematica, Politecnico di Torino,
Corso Duca degli Abruzzi 24, I-10129 Torino, Italy}
\email{emilio.musso@polito.it}
\author{Mason Pember}
\address{(M. Pember) Dipartimento di Matematica, Politecnico di Torino,
Corso Duca degli Abruzzi 24, I-10129 Torino, Italy}
\email{mason.pember@polito.it}
\begin{document}
\maketitle

\begin{abstract}
We consider codimension 2 sphere congruences in pseudo-conformal geometry that are harmonic with respect to the conformal structure of an orthogonal surface. We characterise the orthogonal surfaces of such congruences as either $S$-Willmore surfaces, quasi-umbilical surfaces, constant mean curvature surfaces in 3-dimensional space forms or surfaces of constant lightlike mean curvature in 3-dimensional lightcones. We then investigate Bryant's quartic differential in this context and show that generically this is divergence free if and only if the surface under consideration is either superconformal or orthogonal to a harmonic congruence of codimension 2 spheres. We may then apply the previous result to characterise surfaces with such a property. 
\end{abstract}

\section{Introduction}
Bryant's quartic differential is a well known conformal invariant of a surface in the conformal $3$-sphere $S^{3}$, see~\cite{B1984}. Bryant proved that if a surface in $S^{3}$ is Willmore then this quartic differential is holomorphic. Voss~\cite{V1985} characterised surfaces for which this quartic differential is holomorphic as either Willmore surfaces or surfaces with constant mean curvature in some spaceform. See~\cite{B2012,BP2009,JMN2016} for a recent overview. An analogous problem was studied in~\cite{MN2016} in the context of Laguerre geometry. In this paper we shall define an analogous quartic differential for spacelike and timelike surfaces in the $(p,q)$-sphere. We shall then see that generically this operator is divergence free if and only if the surface is orthogonal to a harmonic congruence of codimension 2 spheres. 

Harmonic maps play a central role in conformal geometry and in particular the study of Willmore surfaces.  It is well known that a surface is Willmore if and only if its central sphere congruence (alternatively known as the conformal Gauss map or mean curvature sphere congruence) is harmonic~\cite{B1929,B1984,E1988,R1987}. In~\cite[Theorem 4.8]{M2006} it is shown that a pair of surfaces in $S^{n}$, considered as a map into the space of point pairs of $S^{n}$, is harmonic if and only if the surfaces are an adjoint pair of Willmore surfaces (see also~\cite{H1998} when $n=3$). Motivated by our result regarding Bryant's quartic differential, we shall investigate codimension 2 sphere congruences that are harmonic with respect to the conformal structure of an orthogonal surface. We characterise the orthogonal surfaces of such sphere congruences as either Willmore surfaces, quasi-umbilic surfaces, constant mean curvature surfaces in 3-dimensional space forms or surfaces of constant lightlike mean curvature in 3-dimensional lightcones. In this way, we extend Voss' characterisation to arbitrary codimension and signature.  

\textit{Acknowledgements.} 
The authors were partially supported by PRIN 2017 ``Real and Complex
Manifolds: Topology, Geometry and Holomorphic Dynamics''
(protocollo 2017JZ2SW5-004). The second and third authors also gratefully acknowledge the support of GNSAGA of INdAM and the MIUR grant ``Dipartimenti di Eccellenza'' 2018 - 2022, CUP: E11G18000350001, DISMA, Politecnico di Torino.

\section{Conformal geometry}
\label{sec:confgeom}
Given a vector space $V$ and a manifold $\Sigma$, we shall denote by $\underline{V}$ the trivial bundle $\Sigma\times V$. If $W$ is a vector subbundle of $\underline{V}$, we denote by $W^{(1)}$ the subset of $\underline{V}$ consisting of the images of sections of $W$ and derivatives of sections of $W$ with respect to the trivial connection on $\underline{V}$ and call $W^{(1)}$ the derived bundle of $W$. In general $W^{(1)}$ will not be a subbundle of $\underline{V}$, however, in many instances, we may assume that it is. 

Let $p,q\in \mathbb{N}$ and let $\mathbb{R}^{p+1,q+1}$ denote a $(p+q+2)$-dimensional vector space equipped with a non-degenerate symmetric bilinear form $(.,.)$ of signature $(p+1,q+1)$. Let $\mathcal{L}$ denote the lightcone of $\mathbb{R}^{p+1,q+1}$. As in M\"{o}bius geometry, the projective lightcone $\mathbb{P}(\mathcal{L})$ is a model of the conformal geometry that we are considering and the pseudo-orthogonal group $\textrm{O}(p+1,q+1)$ represents the group of transformations of this geometry. We shall refer to $\mathbb{P}(\mathcal{L})$ as the \textit{$(p,q)$-sphere}. 

It is well known that the exterior algebra $\wedge^{2}\mathbb{R}^{p+1,q+1}$ is isomorphic to the Lie algebra $\mathfrak{o}(p+1,q+1)$ of $\textrm{O}(p+1,q+1)$, i.e., the space of skew-symmetric endomorphisms of $\mathbb{R}^{p+1,q+1}$, via the isomorphism
\[ a\wedge b\mapsto (a\wedge b),\]
where for any $c\in\mathbb{R}^{p+1,q+1}$, 
\[ (a\wedge b)c = (a,c)b - (b,c)a.\]
We shall make silent use of this identification throughout this paper. 

As in the case of the conformal geometry of $S^{n}$ (see~\cite{BS2012, H2003}), one can break symmetry to study space form geometry. Choose a non-zero vector $\mathfrak{q}\in \mathbb{R}^{p+1,q+1}$. Then we define
\begin{equation} 
\label{eqn:spaceform}
\mathfrak{Q}^{p,q} := \{ y\in \mathcal{L}: (y,\mathfrak{q})=-1\}.
\end{equation}
This is isometric to a union of $(p+q)$-dimensional space forms with signature $(p,q)$ and constant sectional curvature $\kappa = - (\mathfrak{q},\mathfrak{q})$. $\mathfrak{q}$ is called the \textit{space form vector} of $\mathfrak{Q}^{p,q}$. In particular, if $(\mathfrak{q},\mathfrak{q})=0$, then $\mathfrak{Q}^{p,q}$ is isometric to $\mathbb{R}^{p,q}$: after fixing a complementary lightlike vector $\mathfrak{o}\in \mathbb{R}^{p+1,q+1}$ with $(\mathfrak{o},\mathfrak{q})=-1$, we have that $\langle \mathfrak{o},\mathfrak{q}\rangle^{\perp}\cong \mathbb{R}^{p,q}$, where angled brackets $\langle \cdot \rangle$ denotes the span of vectors. Now the orthogonal projection 
\begin{equation}
\label{eqn:oqproj} 
\pi:\mathbb{R}^{p+1,q+1}\to \langle \mathfrak{o},\mathfrak{q}\rangle^{\perp}, \quad y\mapsto y + (y,\mathfrak{q})\mathfrak{o} + (y,\mathfrak{o})\mathfrak{q}
\end{equation}
restricts to an isometry from $\mathfrak{Q}^{p,q}$ to $\langle \mathfrak{o},\mathfrak{q}\rangle^{\perp}$. Moreover, the image of $\mathfrak{Q}^{p,q}\cap \langle \mathfrak{o}\rangle^{\perp}$ under $\pi$ is the lightcone of $\langle \mathfrak{o},\mathfrak{q}\rangle^{\perp}$. 

\subsection{Sphere congruences}
$(r,s)$-spheres in $\mathbb{P}(\mathcal{L})$, that is totally umbilic submanifolds of signature $(r,s)$ that are maximal with respect to inclusion, are parametrised by linear subspaces of $\mathbb{R}^{p+1,q+1}$. Namely, given a subspace $V$ of $\mathbb{R}^{p+1,q+1}$ with signature $(r+1,s+1)$, we identify this with an $(r,s)$-sphere via the map $V\mapsto \mathbb{P}(V\cap\mathcal{L})$. 

\begin{definition}
An \textit{$m$-dimensional $(r,s)$-sphere congruence} is an $m$-dimensional manifold $\Sigma$ together with a signature $(r+1,s+1)$ subbundle $V$ of $\underline{\mathbb{R}}^{p+1,q+1}$ or, equivalently, a map from $\Sigma$ into $\textrm{Gr}_{r+1,s+1}(\mathbb{R}^{p+1,q+1})$. 
\end{definition}

Submanifolds of conformal geometry are maps $f:\Sigma\to \mathbb{P}(\mathcal{L})$ or, equivalently, null line subbundles $f\le \underline{\mathbb{R}}^{p+1,q+1}$. We shall make no distinction between these two objects. 

\begin{definition}
\label{def:envorth}
An \textit{enveloping submanifold} of a sphere congruence $V$ is a null line subbundle $f\le V$ satisfying $f^{(1)}\le V$. 

An \textit{orthogonal submanifold} of a sphere congruence $W$ is a null line subbundle $f\le W$ satisfying $f^{(1)} = f \oplus (f^{(1)}\cap W^{\perp})$. 
\end{definition}

\subsection{Spacelike and timelike immersions}

Suppose that $\Sigma$ is a 2-dimensional manifold and that $f:\Sigma\to \mathbb{P}(\mathcal{L})$ is a spacelike or timelike immersion, i.e., $f^{(1)}/f$ is a rank 2 bundle inheriting a non-degenerate metric from $\mathbb{R}^{p+1,q+1}$. Let $f^{(1)}/f$ have signature $(2-\epsilon, \epsilon)$ for $\epsilon \in\{0,1\}$. Since $f$ is null, it follows that $f^{(1)}\le f^{\perp}$. 

Suppose that $V$ is a $(2-\epsilon, \epsilon)$-sphere congruence enveloping $f$ and suppose that $U\le f^{(1)}$ is a rank 2 subbundle of $f^{(1)}$ that is complementary to $f$, i.e., $f^{(1)}=f\oplus U$. Then $U$ has signature $(2-\epsilon,\epsilon)$. Since $V\cap U^{\perp}$ has signature $(1,1)$, we may choose a null line subbundle $\hat{f}$, called a \textit{Weyl structure of $f$}, such that $V = (f\oplus \hat{f})\oplus_{\perp} U$. We now have a splitting of $\underline{\mathbb{R}}^{p+1,q+1}$: 
\begin{equation}
\label{eqn:weylsplit}
 \underline{\mathbb{R}}^{p+1,q+1} = f\oplus \hat{f}\oplus U \oplus V^{\perp}.
 \end{equation}
This yields a splitting of the trivial connection as 
\begin{equation}
\label{eqn:weylsplitcon}
d = D - \beta - \hat{\beta} + \mathrm{I\!I} + A
\end{equation}
where $D$ is a metric connection preserving $f$, $\hat{f}$, $U$ and $V^{\perp}$, and $\beta\in \Omega^{1}(\hat{f}\wedge U)$, $\hat{\beta}\in\Omega^{1}(f\wedge U)$, $\mathrm{I\!I}\in \Omega^{1}(U\wedge V^{\perp})$ and $A\in \Omega^{1}(f\wedge V^{\perp})$. 

\begin{remark}
\label{rem:Aobs}
Given a section $\hat{F}\in \Gamma \hat{f}$, we have that 
\[ d\hat{F} = D\hat{F} - \hat{\beta}\hat{F} + A\hat{F}.\]
Since $D\hat{F} - \hat{\beta}\hat{F} \in \Omega^{1}(V)$ and $A\hat{F}\in \Omega^{1}( V^{\perp})$ we see that $A$ is the obstruction to $\hat{f}$ being a second enveloping surface of $V$. 
\end{remark}

The sphere congruence $V$ also gives rise to a splitting of the trivial bundle 
\begin{equation}
\label{eqn:Vsplit} \underline{\mathbb{R}}^{p+1,q+1} = V\oplus V^{\perp},
\end{equation}
and thus a splitting of the trivial connection 
\begin{equation}
\label{eqn:Vsplitcon}
d = \mathcal{D}^{V} + \mathcal{N}^{V},
\end{equation}
where $\mathcal{D}^{V}$ is the sum of the induced connections on $V$ and $V^{\perp}$ and $\mathcal{N}^{V}\in \Omega^{1}(V\wedge V^{\perp})$. The condition that $f$ envelopes $V$ can the be characterised by $\mathcal{N}^{V}f \equiv 0$. In terms of the splitting~\eqref{eqn:weylsplit}, \eqref{eqn:weylsplitcon}, we have that 
\begin{equation} 
\label{eqn:VWeyl}
 \mathcal{D}^{V} = D - \beta - \hat{\beta} \quad \text{and}\quad \mathcal{N}^{V} = \textrm{I\!I} + A.
\end{equation}

\subsection{Central sphere congruence}
$\Sigma$ inherits a signature $(2-\epsilon, \epsilon)$ conformal structure from the metrics $(dF,dF)$ for  $F\in \Gamma f^{\times}$. We write $T\Sigma\otimes \mathbb{C} = T^{1,0}\Sigma\oplus T^{0,1}\Sigma$, where $T^{1,0}\Sigma$ and $T^{0,1}\Sigma$ are the complex null line subbundles of this conformal structure. When $\epsilon =0$ these are complex conjugate and when $\epsilon =1$ these are the complex span of real null line bundles. We may then split $U\otimes \mathbb{C} = U_{+}\oplus U_{-}$, where $U_{+}=\beta(T^{1,0}\Sigma)f$ and $U_{-}=\beta(T^{0,1}\Sigma)f$ are complex null line subbundles of $\underline{\mathbb{R}}^{p+1,q+1}\otimes \mathbb{C}$. The restriction of $\beta$ to $T^{1,0}\Sigma$ then satisfies $\beta^{1,0} \in \Omega^{1,0}(\hat{f}\wedge U_{+})$ and similarly $\beta^{0,1}\in \Omega^{0,1}(\hat{f}\wedge U_{-})$. Note that, since $U_{+}, U_{-}\le U\otimes \mathbb{C}$ are complex null lines subbundles, we have that they are parallel for the metric connection $D$. 

The conformal structure gives rise to a Hodge star operator $\star$ on $T^{*}\Sigma$ satisfying $\star^{2} = (-1)^{1-\epsilon}id$. By fixing an orientation, we may assume that $\star$ acts as $(-1)^{\frac{1-\epsilon}{2}}\textrm{id}$ on $(T^{1,0}\Sigma)^{*}$ and $-(-1)^{\frac{1-\epsilon}{2}}\textrm{id}$ on  $(T^{0,1}\Sigma)^{*}$. 

The \textit{central sphere congruence} of $f$ is given by
\[ V_{csc} :=  f^{(1)}\oplus \langle d_{Z_{+}}d_{Z_{-}}F\rangle,\]
where $F\in \Gamma f$, $Z_{+}\in \Gamma T^{1,0}\Sigma$ and $Z_{-}\in \Gamma T^{0,1}\Sigma$.  Note that $V_{csc}$ does not depend on choices. It is characterised by the following property:  

\begin{proposition}
\label{prop:csc}
Suppose that $V$ is a $(2-\epsilon, \epsilon)$ sphere congruence enveloped by $f$. Then
$V$ is the central sphere congruence of $f$ if and only if $\mathrm{I\!I}^{1,0}\in \Omega^{1,0}(U_{-}\wedge V^{\perp})$ and $\mathrm{I\!I}^{0,1}\in \Omega^{0,1}(U_{+}\wedge V^{\perp})$. 
\end{proposition}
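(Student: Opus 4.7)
The plan is to compute the $V^{\perp}$-component of $d_{Z_{+}}d_{Z_{-}}F$ directly from the Weyl splitting~\eqref{eqn:weylsplitcon} and observe that $V=V_{csc}$ is equivalent to the vanishing of this component. Since $f^{(1)}\le V$ has rank one less than $V$, the inclusion $V_{csc}\le V$ forces $V_{csc}=V$; and this happens precisely when $d_{Z_{+}}d_{Z_{-}}F$ has trivial $V^{\perp}$-projection. Different choices of $F$ and $Z_{\pm}$ only alter $d_{Z_{+}}d_{Z_{-}}F$ by sections of $f^{(1)}\le V$, so the $V^{\perp}$-projection is well defined independently of the choices.

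First I would read off the action of the summands in~\eqref{eqn:weylsplitcon} on $\Gamma f$. Since $f$ is null and $U,V^{\perp}\subset f^{\perp}$, the endomorphisms $\hat{\beta}$, $\mathrm{I\!I}$ and $A$ all annihilate $\Gamma f$, while $\beta$ maps $\Gamma f$ into $\Omega^{1}(U)$. Hence $dF = DF - \beta F$, and by definition of $U_{-}$ the section $u:=\beta_{Z_{-}}F$ lies in $\Gamma U_{-}$. Upon further differentiation, the piece $D_{Z_{-}}F \in \Gamma f$ contributes nothing to $V^{\perp}$ by the same observation applied to it, so only $d_{Z_{+}}u$ matters.

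Next I would apply~\eqref{eqn:weylsplitcon} to $u\in\Gamma U$ and retain only the $V^{\perp}$ part: $Du\in\Gamma U$, $\beta u\in\Omega^{1}(\hat{f})$, $\hat{\beta}u\in\Omega^{1}(f)$ and $Au=0$, so the $V^{\perp}$ part of $du$ is exactly $\mathrm{I\!I}\,u$. Evaluating at $Z_{+}$ and recalling $u\in\Gamma U_{-}$, the $V^{\perp}$-component of $d_{Z_{+}}d_{Z_{-}}F$ is $-\mathrm{I\!I}^{1,0}(Z_{+})(\beta_{Z_{-}}F)$. Hence $V=V_{csc}$ is equivalent to $\mathrm{I\!I}^{1,0}(Z_{+})$ annihilating $\Gamma U_{-}$ for every $Z_{+}\in\Gamma T^{1,0}\Sigma$. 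Using $(u'\wedge v)u = (u',u)v$ for $u'\in U$, $v\in V^{\perp}$, $u\in U_{-}$ (the $V^{\perp}$-term drops since $V^{\perp}\perp U$), together with the fact that $U_{\pm}$ are dual null lines for the metric on $U$, this identifies the condition as $\mathrm{I\!I}^{1,0}\in \Omega^{1,0}(U_{-}\wedge V^{\perp})$. The companion condition on $\mathrm{I\!I}^{0,1}$ follows either by repeating the argument with the roles of $Z_{+}$ and $Z_{-}$ reversed (using $d_{[Z_{+},Z_{-}]}F\in \Gamma f^{(1)}\le V$) or, when $\epsilon=0$, by complex conjugation.

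The main obstacle I expect is purely the bookkeeping: tracking which summands of~\eqref{eqn:weylsplitcon} act trivially on $\Gamma f$ and on $\Gamma U$ under successive differentiation, and then converting the pointwise condition $\mathrm{I\!I}^{1,0}|_{U_{-}}=0$ into the stated subbundle statement. Beyond these, the computations are routine.
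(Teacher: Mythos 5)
Your proof is correct and follows essentially the same route as the paper: both compute the $V^{\perp}$-component of $d_{Z_{+}}d_{Z_{-}}F$ and reduce the condition $V=V_{csc}$ to $\mathrm{I\!I}^{1,0}$ annihilating $U_{-}$. The only cosmetic difference is that you work directly with the finer Weyl splitting $d = D - \beta - \hat\beta + \mathrm{I\!I} + A$, whereas the paper first uses $d = \mathcal{D}^V + \mathcal{N}^V$ and then unpacks $\mathcal{N}^V = \mathrm{I\!I} + A$ via~\eqref{eqn:VWeyl}; the substance of the computation is identical.
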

\begin{proof}

Using the splitting \eqref{eqn:Vsplit} and \eqref{eqn:Vsplitcon}, we have that 
\[ d_{Z_{+}}d_{Z_{-}}F = \mathcal{D}^{V}_{Z_{+}}d_{Z_{-}}F + \mathcal{N}^{V}_{Z_{+}}d_{Z_{-}}F,\]
for $Z_{+}\in \Gamma T^{1,0}\Sigma$, $Z_{-}\in \Gamma T^{0,1}\Sigma$ and $F\in \Gamma f$. Since 
$\mathcal{D}^{V}_{Z_{+}}d_{Z_{-}}F\in \Gamma V$ it follows that $d_{Z_{+}}d_{Z_{-}}F\in \Gamma V$ if and only if $\mathcal{N}^{V}_{Z_{+}}d_{Z_{-}}F$. Since $\mathcal{N}^{V}f\equiv 0$, the latter is equivalent to $0 = \mathcal{N}^{V}_{Z_{+}}U_{-} = (\textrm{I\!I}_{Z_{+}} + A_{Z_{+}})U_{-}=\textrm{I\!I}_{Z_{+}}U_{-}$, using~\eqref{eqn:VWeyl}. Similarly, $d_{Z_{-}}d_{Z_{+}}F\in \Gamma V$ if and only if $\textrm{I\!I}_{Z_{-}}U_{+} = 0$. Finally, we have that $d_{Z_{-}}d_{Z_{+}}F \equiv d_{Z_{+}}d_{Z_{-}}F$ modulo $f^{(1)}$. Thus $d_{Z_{+}}d_{Z_{-}}F\in \Gamma V$ if and only if $\textrm{I\!I}_{Z_{+}}U_{-} = 0$ and $\textrm{I\!I}_{Z_{-}}U_{+} = 0$, proving the result. 
\end{proof}

From now on, let us assume that $V$ is the central sphere congruence of $f$. Since $U_{\pm}$ are null, Proposition~\ref{prop:csc} implies that 
\begin{equation}
\label{eqn:betaII}
[\beta^{1,0}\wedge \mathrm{I\!I}^{0,1}]= [\beta^{0,1}\wedge \mathrm{I\!I}^{1,0}]=0.
\end{equation}
Let us consider $\hat{\beta}$. We may write $\hat{\beta}= Q + \hat{\beta}_{0}$ where 
\begin{align*}
&Q^{1,0}\in \Omega^{1,0}(f\wedge U_{-}), \quad  Q^{0,1}\in \Omega^{0,1}(f\wedge U_{+}),\quad \text{and}\\ 
&\hat{\beta}^{1,0}_{0}\in \Omega^{1,0}(f\wedge U_{+}), \quad\hat{\beta}^{0,1}_{0}\in \Omega^{0,1}(f\wedge U_{-}).
\end{align*}
Notice that, since $U_{\pm}$ are rank 1 null subbundles, 
\begin{align} 
\label{eqn:betaQ} [\beta^{1,0}\wedge Q^{0,1}] = [\beta^{0,1}\wedge Q^{1,0}] &=0, \quad\text{and} \\
\label{eqn:IIhatbeta0} [\mathrm{I\!I}^{1,0}\wedge \hat{\beta}_{0}^{0,1}]=  [\mathrm{I\!I}^{0,1}\wedge \hat{\beta}_{0}^{1,0}] &=0 .
\end{align}
\begin{remark} 
We may identify $Q$ with the quadratic differential 
\[ q(X,Y) = \tr(f\to f: F\mapsto Q(X)\beta(Y)F) = - (\beta(Y), Q(X)).\]
Since $Q^{1,0} \in \Omega^{1,0}(f\wedge U_{-})$ and $Q^{0,1} \in \Omega^{0,1}(f\wedge U_{+})$, we have that $q = q^{2,0} + q^{0,2}$, where $q^{2,0} \in\Gamma(T^{*}\Sigma)^{2,0}$ and $q^{0,2}\in \Gamma(T^{*}\Sigma)^{0,2}$. 
\end{remark}

In addition to the equations~\eqref{eqn:betaII}, \eqref{eqn:betaQ} and \eqref{eqn:IIhatbeta0}, the flatness of the trivial connection
\[  d = D - \beta - Q - \hat{\beta}_{0} + \mathrm{I\!I} + A\]
gives rise to the Gauss-Codazzi-Ricci equations of the splitting:
\begin{align}
\label{eqn:curvD} R^{D} +[\beta\wedge \hat{\beta}_{0}] + \frac{1}{2}[\mathrm{I\!I}\wedge \mathrm{I\!I}] &= 0,\\
\label{eqn:Dbeta} d^{D}\beta &= 0, \\
\label{eqn:DQDhatbeta0AII} d^{D}Q + d^{D}\hat{\beta}_{0} - [A\wedge \mathrm{I\!I}] &= 0,\\
\label{eqn:DAQII} d^{D}A - [Q\wedge \mathrm{I\!I}] &=0, \quad \text{and} \\
\label{eqn:DII} d^{D}\mathrm{I\!I} - [\beta\wedge A] &=0,
\end{align}
where $R^{D}$ is the curvature tensor of the connection $D$. In the following sections we shall utilise the equations derived in this section when $U$ is a given rank 2 subbundle of $f^{(1)}$. 

\subsection{Superconformal surfaces}
In the conformal $n$-sphere $S^{n}$ there is a class of surfaces called superconformal surfaces that arise in the study of Willmore surfaces (see \cite{BFLPP2001, DT2009, DV2014, DV2015, MWW2017}). These are the surfaces $f$ for which $\textrm{I\!I}^{1,0}V$ is isotropic, where $V$ is the central sphere congruence of $f$. In this setting, since these surfaces are spacelike, one has that $\textrm{I\!I}^{0,1}V$ is also isotropic.

When a surface is timelike, it is possible for $\textrm{I\!I}^{1,0}V$ to be isotropic, independent of the nature of $\textrm{I\!I}^{0,1}V$. We therefore make the following definitions in our more general setting: 

\begin{definition}
\label{def:superconformal}
We call $p\in \Sigma$ a \textit{superconformal}  (respectively, \textit{half-superconformal}) point of $f$ if $(\textrm{I\!I}^{1,0}V)(p)$ and $(\textrm{I\!I}^{0,1}V)(p)$ are isotropic (exactly one of $(\textrm{I\!I}^{1,0}V)(p)$ or $(\textrm{I\!I}^{0,1}V)(p)$ is isotropic). 

$f$ is called \textit{superconformal} (respectively, \textit{half-superconformal}) if every point $p\in \Sigma$ is  superconformal (half-superconformal). 

$f$ is called \textit{nowhere superconformal} (respectively, \textit{nowhere half-superconformal}) if no point $p\in \Sigma$ is superconformal (half-superconformal). 
\end{definition}

\section{Orthogonal surfaces of codimension 2 harmonic sphere congruences}

Suppose that $f$ is an orthogonal surface to a codimension 2 sphere congruence $W$. Then, it follows from Definition~\ref{def:envorth}, that $f^{(1)} = f\oplus U$ where $U:= W^{\perp}$. We ask when $U^{\perp}$ is a harmonic map with respect to the conformal structure of $f$. The following theorem gives a complete answer to this question: 
\begin{theorem}
\label{thm:orthharmonic}
Orthogonal surfaces of codimension 2 harmonic sphere congruences are characterised, away from a nowhere dense subset of their domain, as either
\begin{itemize}
\item $S$-Willmore surfaces,
\item quasi-umbilical surfaces, 
\item surfaces of constant mean curvature in 3-dimensional space forms, or
\item surfaces of constant lightcone mean curvature in 3-dimensional lightcones. 
\end{itemize}
\end{theorem}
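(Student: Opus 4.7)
The plan is to convert the harmonicity of $W$ into algebraic conditions on the Weyl splitting data of $f$ and then extract the four classes by case analysis.

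First, I set $U := W^\perp$; by the orthogonality hypothesis, $f^{(1)} = f \oplus U$, so $U$ has signature $(2-\epsilon,\epsilon)$. Since the central sphere congruence $V$ of $f$ has rank $4$ and contains $f \oplus U$, it equals $f \oplus \hat{f} \oplus U$, where $\hat{f}$ is the unique null line in the signature-$(1,1)$ plane $V \cap U^\perp$ distinct from $f$. This fixes a canonical Weyl structure, and $W = (f \oplus \hat{f}) \oplus V^\perp$. Reading off the splitting $\underline{\mathbb{R}}^{p+1,q+1} = W \oplus U$ from~\eqref{eqn:weylsplitcon}, one obtains
\[
\mathcal{D}^W = D + A, \qquad \mathcal{N}^W = -\beta - \hat{\beta} + \mathrm{I\!I}.
\]

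Next I would use that flatness of the ambient connection together with $\mathcal{D}^W$-preservation of $W \oplus U$ implies the Codazzi identity $d^{\mathcal{D}^W}\mathcal{N}^W = 0$, so that the harmonic condition $d^{\mathcal{D}^W}\star \mathcal{N}^W = 0$ is equivalent to $d^{\mathcal{D}^W}\mathcal{N}^{W,1,0} = 0$. Expanding using~\eqref{eqn:Dbeta}--\eqref{eqn:DII} to substitute for $d^D\beta$, $d^D(Q+\hat{\beta}_0)$, and $d^D\mathrm{I\!I}$, and then projecting onto the sub-bundles $\hat{f}\wedge U_\pm$, $f\wedge U_\pm$ and $U_\pm\wedge V^\perp$ reduces the harmonic condition to the pair
\[
d^D\hat{\beta}_0 = 0, \qquad d^D\mathrm{I\!I} = 0.
\]
Combining the second equation with~\eqref{eqn:DII} gives $[\beta\wedge A] = 0$; since $\beta^{1,0}$ has non-vanishing $U_+$-part (by immersivity of $f$) while $U_\pm$ meet $V^\perp$ trivially, this forces $A \equiv 0$ on each type. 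By Remark~\ref{rem:Aobs}, $\hat{f}$ is then a second enveloping surface of $V$, and $d^D\mathrm{I\!I}=0$ makes $V$ harmonic, so $f$ is Willmore. Substituting $A = 0$ into~\eqref{eqn:DAQII} yields the further constraint $[Q\wedge \mathrm{I\!I}] = 0$.

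The four classes then emerge from unpacking the residual system $A = 0$, $d^D\hat{\beta}_0 = 0$, $d^D\mathrm{I\!I} = 0$, $[Q\wedge\mathrm{I\!I}] = 0$. Splitting $[Q\wedge\mathrm{I\!I}]$ by type and using that the pairing $U_+ \otimes U_- \to \mathbb{C}$ is non-degenerate bifurcates the equation into the vanishing of certain type components of $Q$ or $\mathrm{I\!I}$. When no such vanishing occurs, $(f,\hat{f})$ is a generic dual pair of Willmore surfaces, giving the $S$-Willmore class. A rank collapse in $\mathrm{I\!I}$ (the normal directions $\mathrm{I\!I}^{1,0}V$ and $\mathrm{I\!I}^{0,1}V$ colliding) produces the quasi-umbilic class. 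Finally, $d^D\hat{\beta}_0 = 0$ combined with $A = 0$ should force $\hat{f}$ to factor through $\langle F, \mathfrak{q}\rangle$ for a distinguished constant ambient vector $\mathfrak{q}$: when $\mathfrak{q}$ is non-null, $f$ has constant mean curvature in the space form $\mathfrak{Q}^{p,q}$ of~\eqref{eqn:spaceform}, while when $\mathfrak{q}$ is null, $f$ has constant lightlike mean curvature in a $3$-dimensional lightcone. Each degeneracy locus being analytic, the partition of $\Sigma$ among the four classes holds away from a nowhere dense subset.

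The hardest step will be the construction of the parallel vector $\mathfrak{q}$ in the space form and lightcone sub-cases: one must combine $A = 0$ and $d^D\hat{\beta}_0 = 0$ to exhibit an explicit $d$-parallel linear combination of sections of $f$, $\hat{f}$ and $V^\perp$, and then determine the causal character of $\mathfrak{q}$ to separate the spacelike/timelike space form case from the lightcone case, while also verifying that the resulting surface truly has constant (lightlike) mean curvature in the induced space form geometry.
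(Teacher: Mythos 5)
Your setup is essentially that of the paper: you identify $U = W^\perp$, exhibit the Weyl structure $\hat f$, read off $\mathcal{D}^{U^\perp} = D+A$ and $\mathcal{N}^{U^\perp} = \mathrm{I\!I}-\beta-\hat\beta$, and reduce harmonicity to type-by-type algebraic conditions. Your conclusion that harmonicity forces $A=0$ (so $\hat f$ is a second envelope of $V$), together with $d^D\hat\beta_0=0$ and, via~\eqref{eqn:DQDhatbeta0AII}, $d^D Q = 0$, matches the paper's Lemma~\ref{lem:harmcond}. However, the case analysis that follows has several genuine errors.

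First, the assertion ``$d^D\mathrm{I\!I}=0$ makes $V$ harmonic, so $f$ is Willmore'' is false. With $\mathcal{D}^V = D - \beta - \hat\beta$ one computes $d^{\mathcal{D}^V}(\mathcal{N}^V)^{1,0} = -[Q^{0,1}\wedge\mathrm{I\!I}^{1,0}]$, which need not vanish; the correct conclusion, and the one the paper draws at~\eqref{eqn:Vconstrained}, is that $f$ is \emph{constrained} Willmore with Lagrange multiplier $-q$. It is Willmore only when $Q=0$. Because of this error your later claim that the generic case (``no vanishing'') gives the $S$-Willmore class is reversed: in fact $S$-Willmore corresponds to $q\equiv 0$ (equivalently, $U^\perp$ is conformal to $f$, Lemma~\ref{lem:confUf}), i.e.\ to the \emph{most} degenerate $Q$, not the generic one.

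Second, the organising principle of the paper's proof is missing from yours. Since $d^D Q=0$ together with~\eqref{eqn:betaQ} and~\eqref{eqn:DAQII} gives $dQ=0$, the quadratic differential $q$ is divergence free, and the classification proceeds by the type of $q$: zero ($S$-Willmore), degenerate (quasi-umbilical, via Lemma~\ref{lem:QIIvan} which forces $\mathcal{N}^V$ to have rank one), or non-degenerate ($f$ isothermic and constrained Willmore with the same multiplier). Your heuristic ``rank collapse in $\mathrm{I\!I}$'' and ``generic dual pair'' do not produce this trichotomy, and in particular the quasi-umbilical case genuinely requires the timelike phenomenon of a degenerate divergence-free quadratic differential, which the spacelike case cannot exhibit. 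Finally, in the non-degenerate case the split between the constant mean curvature and constant lightcone mean curvature subcases is governed by whether $(\mathrm{I\!I},\mathrm{I\!I})\not\equiv 0$ or $\equiv 0$ (i.e.\ superconformality), not by the causal character of the constructed space form vector $\mathfrak q$: indeed $\kappa=-(\mathfrak q,\mathfrak q)=-2\mu-1$ in the CMC case, so $\mathfrak q$ can be lightlike there too ($\mu=-1/2$). One must first extract a $D$-parallel normal $N$ via Lemma~\ref{lem:IIbracket}, deduce flatness of $U^\perp$, and only then build $\mathfrak q=\hat F - \mu F - (N,N)N$ in the non-isotropic case; the isotropic case needs the separate lightcone construction. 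Your sketch therefore needs to be substantially repaired before it proves the theorem.
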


This section is devoted to proving Theorem~\ref{thm:orthharmonic}.

Let $V$ be the central sphere congruence of $f$ and let $\hat{f}$ be the Weyl structure of $f$ such that $V=(f\oplus \hat{f})\oplus_{\perp} U$.  Let us now consider the orthogonal splitting 
\[ \underline{\mathbb{R}}^{p+1,q+1} = U\oplus U^{\perp}\]
which yields the splitting of the trivial connection 
\begin{equation} 
\label{eqn:Usplit}
d = \mathcal{D}^{U} + \mathcal{N}^{U},
\end{equation}
where $\mathcal{D}^{U}$ is the sum of the induced connections on $U$ and $U^{\perp}$ and $\mathcal{N}^{U}\in \Omega^{1}(U\wedge U^{\perp})$. 
Comparing this with the splitting~\eqref{eqn:weylsplit} and~\eqref{eqn:weylsplitcon}, we have that 
\[ \mathcal{D}^{U} = D +A \quad \text{and} \quad \mathcal{N}^{U} = \mathrm{I\!I} - \beta -Q - \hat{\beta}_{0}.\]
The flatness of $d$ implies that $d^{\mathcal{D}^{U}}\mathcal{N}^{U}=0$. Thus $U^{\perp}$ is harmonic with respect to the conformal structure of $f$, i.e., $d^{\mathcal{D}^{U}}\star\mathcal{N}^{U}=0$, if and only if 
\begin{equation}
\label{eqn:DUNU}
d^{\mathcal{D}^{U}}(\mathcal{N}^{U})^{1,0}=0 = d^{\mathcal{D}^{U}}(\mathcal{N}^{U})^{0,1}. 
\end{equation}
The equation $d^{\mathcal{D}^{U}}(\mathcal{N}^{U})^{1,0}=0$ then splits as
\begin{align}
\label{eqn:DII10A}d^{D}\mathrm{I\!I}^{1,0} - [A\wedge \beta^{1,0}] &= 0,\\
\label{eqn:DQ10Dhatbeta010}- d^{D}Q^{1,0} - d^{D}\hat{\beta}_{0}^{1,0} + [A\wedge \textrm{I\!I}^{1,0}] &=0. 
\end{align}
Since $U_{\pm}, V^{\perp}$ are $D$-parallel subbundles, we have that $d^{D}\mathrm{I\!I}^{1,0}\in \Omega^{1,1}(U_{-}\wedge V^{\perp})$, whereas $[A\wedge \beta^{1,0}]\in \Omega^{1,1}(U_{+}\wedge V^{\perp})$. Hence, \eqref{eqn:DII10A} holds if and only if 
\begin{align}
\label{eqn:DII10} d^{D}\mathrm{I\!I}^{1,0} = 0,\\
\label{eqn:Abeta10} [A\wedge \beta^{1,0}] = 0. 
\end{align}
Since $\beta^{1,0}$ is nowhere zero, \eqref{eqn:Abeta10} holds if and only if $A^{0,1} = 0$. Equation~\eqref{eqn:DQ10Dhatbeta010} then reduces to 
\begin{align}
\label{eqn:DQ10}d^{D}Q^{1,0} &=0,\\
d^{D}\hat{\beta}_{0}^{1,0} &= 0,
\end{align}
using again the fact that $U_{\pm}$ are parallel subbundles of $D$. Similarly, one can show that $d^{\mathcal{D}^{U}}(\mathcal{N}^{U})^{0,1}=0$ if and only if 
\[A^{1,0} = d^{D}\mathrm{I\!I}^{0,1} = d^{D}Q^{0,1} = d^{D}\hat{\beta}_{0}^{0,1} =0.\]
Since $d^{D}Q^{1,0}\in \Omega^{2}(f\wedge U_{-})$ and $d^{D}Q^{0,1}\in \Omega^{2}(f\wedge U_{+})$, we have that $d^{D}Q^{1,0} = d^{D}Q^{0,1}= 0$ if and only if $d^{D}Q=0$. An analogous statement holds for $d^{D}\hat{\beta}_{0}$. Recalling Remark~\ref{rem:Aobs}, we have thus arrived at the following lemma: 

\begin{lemma}
\label{lem:harmcond}
$U^{\perp}$ is harmonic with respect to the conformal structure induced by $f$ if and only if $\hat{f}$ is a second enveloping surface of $V$,
\begin{align}
\label{eqn:DQ} d^{D}Q &=0, \quad \text{and} \\
\label{eqn:Dhatbeta0} d^{D}\hat{\beta}_{0} &=0 .
\end{align}
\end{lemma}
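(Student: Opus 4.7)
The plan is to exploit the two splittings of the trivial connection already in play: the one $d = \mathcal{D}^{U}+\mathcal{N}^{U}$ adapted to $U$, and the finer one $d = D - \beta - Q - \hat\beta_0 + \mathrm{I\!I} + A$ coming from the Weyl structure. Harmonicity of $U^{\perp}$ is the condition $d^{\mathcal{D}^{U}}\star\mathcal{N}^{U}=0$; together with the flatness identity $d^{\mathcal{D}^{U}}\mathcal{N}^{U}=0$ (which is just $d\circ d=0$ repackaged), this is equivalent to the simultaneous vanishing of the $(1,0)$ and $(0,1)$ pieces $d^{\mathcal{D}^{U}}(\mathcal{N}^{U})^{1,0}$ and $d^{\mathcal{D}^{U}}(\mathcal{N}^{U})^{0,1}$. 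So the game is to expand these two equations and read off what they say about $A$, $Q$, $\hat\beta_0$ and $\mathrm{I\!I}$ separately.

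To expand, I would substitute $\mathcal{D}^{U} = D + A$ and $\mathcal{N}^{U} = \mathrm{I\!I} - \beta - Q - \hat\beta_{0}$, noting that the cross terms with $A$ only contribute brackets $[A\wedge\cdot]$. The key structural input is that $U_{\pm}$ and $V^{\perp}$ are parallel with respect to $D$, so that $d^{D}$ preserves the decomposition of $\Omega^{\bullet}(\cdot\wedge\cdot)$ by factor type. Consequently $d^{D}\mathrm{I\!I}^{1,0}$ lives in $\Omega^{1,1}(U_{-}\wedge V^{\perp})$ whereas $[A\wedge\beta^{1,0}]$ lives in $\Omega^{1,1}(U_{+}\wedge V^{\perp})$, so these two cannot cancel and must vanish individually; similar orthogonality of subbundles forces the $Q$ and $\hat\beta_{0}$ terms to separate from the rest.

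From the $A\wedge\beta^{1,0}$ piece and the fact that $\beta^{1,0}$ is nowhere zero (since $f$ is an immersion), one concludes $A^{0,1}=0$; the analogous $(0,1)$ equation gives $A^{1,0}=0$, so $A=0$, and by Remark~\ref{rem:Aobs} this is exactly the statement that $\hat f$ is a second enveloping surface of $V$. What remains of the two type-separated equations are $d^{D}\mathrm{I\!I}^{1,0}=d^{D}\mathrm{I\!I}^{0,1}=0$, $d^{D}Q^{1,0}=d^{D}Q^{0,1}=0$ and $d^{D}\hat\beta_{0}^{1,0}=d^{D}\hat\beta_{0}^{0,1}=0$. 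Since $d^{D}Q^{1,0}$ and $d^{D}Q^{0,1}$ land in $\Omega^{2}(f\wedge U_{-})$ and $\Omega^{2}(f\wedge U_{+})$ respectively, their joint vanishing is the same as $d^{D}Q=0$, and similarly for $\hat\beta_{0}$. Once $A=0$, equation~\eqref{eqn:DII} reduces to $d^{D}\mathrm{I\!I}=0$ automatically, so the $\mathrm{I\!I}$ conditions need not be listed.

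The main obstacle is purely book-keeping: one has to juggle the three independent binary decompositions (domain type $(1,0)$ vs $(0,1)$; target splitting of $U$ into $U_{\pm}$; and whether the wedge factor is $f$, $\hat f$ or $V^{\perp}$) without confusing the $Q$ and $\hat\beta_{0}$ contributions, since $Q^{1,0}$ and $\hat\beta_{0}^{0,1}$ both take values in $f\wedge U_{-}$. Once this is handled cleanly, the three stated conditions emerge directly.
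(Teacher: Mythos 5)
Your argument follows the paper's proof essentially verbatim: expand $d^{\mathcal{D}^{U}}(\mathcal{N}^{U})^{1,0}$ and $d^{\mathcal{D}^{U}}(\mathcal{N}^{U})^{0,1}$ in the Weyl splitting, use $D$-parallelism of $U_{\pm}$ and $V^{\perp}$ to separate terms by target subbundle, extract $A=0$ from $[A\wedge\beta^{1,0}]=0$ and its conjugate, and then recombine the remaining conditions into $d^{D}Q=0$ and $d^{D}\hat\beta_0=0$. Your explicit remark that $d^{D}\mathrm{I\!I}=0$ is automatic once $A=0$ (via the Codazzi equation~\eqref{eqn:DII}) is a point the paper leaves implicit, and it is exactly the reason the $\mathrm{I\!I}$ equations can be omitted from the lemma statement.
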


Suppose now that $U^{\perp}$ is harmonic with respect to the conformal structure induced by $f$. 
Together with~\eqref{eqn:betaQ} and \eqref{eqn:DAQII}, \eqref{eqn:DQ} implies that $dQ = 0$. Equivalently, $q$ is a divergence free quadratic differential. In particular, if $\epsilon = 0$, then $q$ is holomorphic. Since $Q\in \Omega^{1}(f\wedge f^{\perp})$, it follows that if $q\neq 0$ then $f$ is an isothermic surface (see~\cite{BDPP2011, BS2012}). 

Using the splitting~\eqref{eqn:Vsplit} and \eqref{eqn:Vsplitcon}, we have that 
\begin{align*}
d^{\mathcal{D}^{V}} (\mathcal{N}^{V})^{1,0} &= d^{D}\textrm{I\!I}^{1,0} - [\beta\wedge  \textrm{I\!I}^{1,0}] - [\hat{\beta}\wedge  \textrm{I\!I}^{1,0}] \\
&= d^{D}\textrm{I\!I}^{1,0} - [Q^{0,1}\wedge  \textrm{I\!I}^{1,0}] - [\hat{\beta}_{0}^{0,1}\wedge \textrm{I\!I}^{1,0}]\\
&= - [Q^{0,1}\wedge  \textrm{I\!I}^{1,0}] \\
&= - [Q\wedge (\mathcal{N}^{V})^{1,0}].
\end{align*}
Similarly, one can show that $\mathcal{D}^{V}( \mathcal{N}^{V})^{0,1} = - [Q\wedge (\mathcal{N}^{V})^{0,1}]$. Thus, 
\begin{equation} 
\label{eqn:Vconstrained}
\mathcal{D}^{V} \star\mathcal{N}^{V} = - [Q\wedge \star\mathcal{N}^{V}],
\end{equation}
implying that $f$ is a constrained Willmore surface with Lagrange multiplier $-q$ (see~\cite{BC2010i, BQ2014}). In particular, if $q$ vanishes then $f$ is a Willmore surface. 

\begin{lemma}
\label{lem:QIIvan}
Let $p\in \Sigma$ and suppose that $Q^{1,0}(p) = 0$ and $Q^{0,1}(p) \neq 0$ (or $Q^{1,0}(p) \neq 0$ and $Q^{0,1}(p) = 0$). Then $\textrm{I\!I}^{1,0}(p)=0$ (respectively, $\textrm{I\!I}^{0,1}(p)=0$). 
\end{lemma}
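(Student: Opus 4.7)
The plan is to exploit the Gauss--Codazzi--Ricci equation \eqref{eqn:DAQII}, which under the standing hypothesis that $U^{\perp}$ is harmonic collapses to $[Q\wedge\textrm{I\!I}]=0$ (we have already deduced $A=0$). Everything else is an algebraic unpacking of this single identity at $p$.

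I first split $[Q\wedge\textrm{I\!I}]=0$ by bi-type. Since $\Sigma$ has real dimension $2$, any wedge of two $(1,0)$-forms vanishes, and similarly for $(0,1)$, so the $(2,0)$ and $(0,2)$ components are automatically zero and what remains is
\[ [Q^{1,0}\wedge\textrm{I\!I}^{0,1}] + [Q^{0,1}\wedge\textrm{I\!I}^{1,0}] = 0. \]
Specialising at $p$ and using $Q^{1,0}(p)=0$ kills the first summand, giving $[Q^{0,1}\wedge\textrm{I\!I}^{1,0}](p)=0$. Evaluating on a basis $(Z_{+},Z_{-})$ of $T_{p}\Sigma\otimes\mathbb{C}$, this reduces to the single Lie bracket relation $[Q^{0,1}_{Z_{-}}(p),\textrm{I\!I}^{1,0}_{Z_{+}}(p)]=0$ in $\wedge^{2}\mathbb{R}^{p+1,q+1}\cong\mathfrak{o}(p+1,q+1)$.

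Next I use the rank-one structure of the fibres involved. The hypothesis $Q^{0,1}(p)\neq 0$ lets me write $Q^{0,1}_{Z_{-}}(p) = \lambda\,F\wedge V_{+}$ with $\lambda\neq 0$, where $V_{+}$ spans $U_{+}(p)$, while $\textrm{I\!I}^{1,0}_{Z_{+}}(p) = V_{-}\wedge N$ for some $N\in V^{\perp}(p)$ and $V_{-}$ spanning $U_{-}(p)$. Expanding $[F\wedge V_{+},V_{-}\wedge N]$ via the standard bracket formula in $\wedge^{2}\mathbb{R}^{p+1,q+1}$ and using the orthogonalities $(f,V^{\perp})=(U,V^{\perp})=(f,U)=0$, three of the four terms vanish and only $-\lambda(V_{+},V_{-})\,F\wedge N$ survives. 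The pairing $U_{+}\times U_{-}$ being non-degenerate, and $F,N$ lying in the transverse summands $f$ and $V^{\perp}$, this forces $N=0$, hence $\textrm{I\!I}^{1,0}_{Z_{+}}(p)=0$, and since $Z_{+}$ was arbitrary, $\textrm{I\!I}^{1,0}(p)=0$. The parenthetical case follows by the obvious conjugate argument, exchanging the roles of $U_{+}$ and $U_{-}$.

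The only real obstacle is careful bookkeeping: keeping track of which $U_{\pm}$ summand appears on each side of each bracket, and verifying that the orthogonality reductions really leave a single surviving term. There is no analytic or geometric subtlety beyond this algebraic manipulation.
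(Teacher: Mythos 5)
Your proof is correct and takes essentially the same route as the paper: both extract $[Q\wedge\textrm{I\!I}]=0$ from \eqref{eqn:DAQII} (using $A=0$), split by type to get $[Q^{1,0}\wedge\textrm{I\!I}^{0,1}]+[Q^{0,1}\wedge\textrm{I\!I}^{1,0}]=0$, and use the rank-one structure of $Q$ and $\textrm{I\!I}$ together with the nondegenerate $U_{+}\times U_{-}$ pairing to see that each bracket can vanish only when one of its factors does. Your explicit expansion of $[F\wedge V_{+},V_{-}\wedge N]$ is just the spelled-out form of the step the paper asserts without computation, and you are slightly more careful than the paper in noting that $A=0$ is needed to pass from \eqref{eqn:DAQII} to $[Q\wedge\textrm{I\!I}]=0$.
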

\begin{proof}
It follows from~\eqref{eqn:DAQII} that $[Q\wedge \textrm{I\!I}]=0$. Hence, 
\[ [Q^{1,0}(p)\wedge \textrm{I\!I}^{0,1}(p)] +  [Q^{0,1}(p)\wedge \textrm{I\!I}^{1,0}(p)] =0. \]
Since $Q^{1,0}\in \Omega^{1,0}(f\wedge U_{-})$ and $\textrm{I\!I}^{0,1}\in (U_{+}\wedge V^{\perp})$, one deduces that $[Q^{1,0}(p)\wedge \textrm{I\!I}^{0,1}(p)] = 0$ if and only if $Q^{1,0}(p)=0$ or  $\textrm{I\!I}^{0,1}(p)=0$. Similarly, $[Q^{0,1}(p)\wedge \textrm{I\!I}^{1,0}(p)]=0$ if and only if $Q^{0,1}(p)=0$ or  $\textrm{I\!I}^{1,0}(p)=0$. 
\end{proof}

It follows from Lemma~\ref{lem:QIIvan} that if $q$ is degenerate then $\mathcal{N}^{V}$ has rank one. Note that this can only happen in the case that $f$ is timelike, since in the spacelike case $q^{2,0} = \overline{q^{0,2}}$. In accordance with \cite{C2012iii,MPwip, T2012} we have the following definition: 

\begin{definition}
A timelike surface is called \textit{quasi-umbilical} if its central sphere congruence has rank 1. 
\end{definition}

\begin{remark}
In the case that $f$ is a spacelike surface, we have that $q$ is a holomorphic quadratic differential. Thus $q$ is either identically zero or has indefinite signature away from a discrete subset of $\Sigma$ where $q$ vanishes. In the case that $f$ is a timelike surface, a divergence free quadratic differential locally has the form $q = Udu^{2}+Vdv^{2}$ where $(u,v)$ are null coordinates and $U$ is a function of $u$ and $V$ is a function of $v$. Therefore, at a point $p\in \Sigma$, $q_{p}$ can be one of 3 types: $q_{p}$ can be zero, degenerate or non-degenerate. It is then not difficult to see that, off a nowhere dense subset of $\Sigma$, each point has a neighbourhood on which $q$ has constant type. 
\end{remark}

In summary, away from a nowhere dense subset of $\Sigma$, we may restrict to a subset of $\Sigma$ where $f$ falls into one of the three categories: 
\begin{enumerate}
\item if $q=0$ then $f$ is a Willmore surface,
\item if $q$ is degenerate then $f$ is a quasi-umbilical surface, and 
\item if $q$ is non-degenerate then $f$ is an isothermic constrained Willmore surface with respect to the same divergence free quadratic differential. 
\end{enumerate}
We shall examine these cases separately and see that stronger statements on $f$ hold. Firstly we see that the nature of $q$ can be characterised by the geometry of $U$. We have that 
\[ (\mathcal{N}^{U})^{1,0} = \textrm{I\!I}^{1,0} - \beta^{1,0} - Q^{1,0} - \hat{\beta}_{0}^{1,0}.\]
Hence, 
\[ (  (\mathcal{N}^{U})^{1,0},  (\mathcal{N}^{U})^{1,0}) = 2( \beta^{1,0},Q^{1,0}) = -2q^{2,0}.\]
Similarly, $ (  (\mathcal{N}^{U})^{0,1},  (\mathcal{N}^{U})^{0,1})=-2q^{0,2}$. Hence: 

\begin{lemma}
\label{lem:confUf}
The metric induced by $U$ is (weakly) conformally equivalent to the conformal structure induced by $f$ if and only if $q = 0$. 
\end{lemma}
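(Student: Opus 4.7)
The plan is to leverage the computation already done in the two lines immediately preceding the lemma: $((\mathcal{N}^{U})^{1,0},(\mathcal{N}^{U})^{1,0}) = -2q^{2,0}$ and $((\mathcal{N}^{U})^{0,1},(\mathcal{N}^{U})^{0,1}) = -2q^{0,2}$. Everything hinges on interpreting ``the metric induced by $U$'' as the symmetric bilinear form $g^{U}$ on $T\Sigma$ defined by $g^{U}(X,Y) := (\mathcal{N}^{U}X,\mathcal{N}^{U}Y)$, using the natural inner product on $\mathfrak{o}(p+1,q+1) = \wedge^{2}\mathbb{R}^{p+1,q+1}$. This is well-defined independent of all auxiliary choices because $\mathcal{N}^{U}$, being the off-diagonal part of $d$ relative to the splitting $\underline{\mathbb{R}}^{p+1,q+1} = U\oplus U^{\perp}$, depends only on the subbundle $U$.

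First I would record the easy general fact that, in two dimensions, two symmetric bilinear forms on $T\Sigma$ are (weakly) conformally equivalent, i.e., one is a scalar multiple of the other, if and only if $g^{U}$ vanishes on the null directions of $g^{f}$. After complexification these null directions are exactly $T^{1,0}\Sigma$ and $T^{0,1}\Sigma$, so the condition becomes $g^{U}(Z_{+},Z_{+}) = 0 = g^{U}(Z_{-},Z_{-})$ for $Z_{\pm}\in \Gamma T^{1,0}\Sigma, \Gamma T^{0,1}\Sigma$.

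Next, since $(\mathcal{N}^{U})^{0,1}$ annihilates $T^{1,0}\Sigma$ and $(\mathcal{N}^{U})^{1,0}$ annihilates $T^{0,1}\Sigma$, evaluation gives
\[ g^{U}(Z_{+},Z_{+}) = ((\mathcal{N}^{U})^{1,0}(Z_{+}),(\mathcal{N}^{U})^{1,0}(Z_{+})) = -2q^{2,0}(Z_{+},Z_{+}), \]
and symmetrically $g^{U}(Z_{-},Z_{-}) = -2q^{0,2}(Z_{-},Z_{-})$, using the displayed pre-lemma identities. Since $q^{2,0}$ is a section of $(T^{*}\Sigma)^{2,0}$ (a rank one line bundle), its vanishing on all $Z_{+}\in T^{1,0}\Sigma$ is equivalent to $q^{2,0}\equiv 0$, and similarly for $q^{0,2}$.

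Combining the last two paragraphs, $g^{U}$ is weakly conformally equivalent to $g^{f}$ if and only if $q^{2,0} = q^{0,2} = 0$, i.e., $q = 0$. The only genuine input beyond the prior computation is the two-dimensional linear-algebra fact about metrics being determined up to scale by their null cone; this is routine but worth stating explicitly. I do not anticipate any real obstacle—the heavy lifting was already done in obtaining the expressions for $((\mathcal{N}^{U})^{1,0},(\mathcal{N}^{U})^{1,0})$ and $((\mathcal{N}^{U})^{0,1},(\mathcal{N}^{U})^{0,1})$ through the orthogonality relations among $f$, $\hat{f}$, $U_{\pm}$ and $V^{\perp}$.
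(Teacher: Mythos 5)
Your proposal is correct and follows the same route the paper takes: the paper proves the lemma implicitly by the two displayed identities $((\mathcal{N}^{U})^{1,0},(\mathcal{N}^{U})^{1,0})=-2q^{2,0}$ and $((\mathcal{N}^{U})^{0,1},(\mathcal{N}^{U})^{0,1})=-2q^{0,2}$ immediately preceding it, followed by ``Hence:''. You simply make the two tacit steps explicit — that the $U$-induced metric is $(X,Y)\mapsto(\mathcal{N}^{U}X,\mathcal{N}^{U}Y)$, and that two symmetric bilinear forms on a surface are conformally related exactly when one vanishes on the (complexified) null cone of the other — which is exactly what ``Hence'' is standing in for.
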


\subsection{$U^{\perp}$ conformal harmonic}

Suppose now that $U^{\perp}$ is a harmonic congruence whose induced metric is conformally equivalent to the conformal structure induced by $f$, i.e., by Lemma~\ref{lem:confUf}, $Q=0$. Hence, $f$ is a Willmore surface. Moreover, since $A=0$, $\hat{f}$ is a second enveloping surface of $V$ by Lemma~\ref{lem:harmcond}. Using again that $Q=0$, yields that the induced conformal structure of $\hat{f}$ is weakly conformally equivalent to that of $f$ and induces the same orientation on $T\Sigma$. This implies that $f$ is $S$-Willmore in the sense of Ejiri~\cite{E1988}. 

Conversely, suppose that $f$ is $S$-Willmore with dual surface $\hat{f}$. Let $V$ be the central sphere congruence of $f$ and define $U:= V\cap (f\oplus \hat{f})^{\perp}$. Since $\hat{f}$ envelopes $V$ we have that $A = 0$. Then since $\hat{f}$ is weakly conformally equivalent to $f$ and induces the same orientation on $T\Sigma$, we have that $Q\equiv 0$. It then follows by~\eqref{eqn:DQDhatbeta0AII} that $d^{D}\hat{\beta}_{0} =0$. Therefore, by Lemma~\ref{lem:harmcond}, $U^{\perp}$ is harmonic and by Lemma~\ref{lem:confUf}, $U^{\perp}$ is conformally equivalent to $f$. 

In summary:

\begin{proposition}
$U^{\perp}$ is a conformal harmonic congruence of codimension 2-spheres with respect to the conformal structure of an orthogonal surface $f$ if and only if $f$ is S-Willmore with dual Willmore surface $\hat{f}$ as a second orthogonal surface. 
\end{proposition}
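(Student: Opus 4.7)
The plan is to reduce the proposition to a direct application of Lemma~\ref{lem:harmcond} and Lemma~\ref{lem:confUf}, together with Ejiri's characterisation of $S$-Willmore surfaces. These two lemmas already encode the conformality and harmonicity conditions into algebraic conditions on $A$, $Q$ and $\hat{\beta}_{0}$, so the task reduces to translating the phrase ``$S$-Willmore with dual $\hat{f}$ as a second orthogonal surface'' into those same conditions. I will therefore work on the level of the splitting~\eqref{eqn:weylsplit} associated to the central sphere congruence $V$ of $f$ and the Weyl structure $\hat{f}$.

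For the forward implication, suppose $U^{\perp}$ is a conformal harmonic congruence. Lemma~\ref{lem:confUf} immediately yields $Q \equiv 0$, while Lemma~\ref{lem:harmcond} provides $A = 0$ (equivalently, by Remark~\ref{rem:Aobs}, $\hat{f}$ envelopes $V$) and $d^{D}\hat{\beta}_{0} = 0$. The vanishing of $Q$ also forces $q = 0$, so equation~\eqref{eqn:Vconstrained} degenerates to $\mathcal{D}^{V}\star\mathcal{N}^{V} = 0$, making $V$ a harmonic congruence and $f$ a Willmore surface. Finally, $Q \equiv 0$ means that $\hat{\beta}$ reduces to its $\hat{\beta}_{0}$ component, whose type decomposition matches $U_{\pm}$ with $T^{1,0}\Sigma$ and $T^{0,1}\Sigma$ correctly; this is precisely the infinitesimal statement that the induced conformal structure on $\hat{f}$ is weakly conformally equivalent to that of $f$ and induces the same orientation. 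These properties together are exactly Ejiri's data for $f$ being $S$-Willmore with dual $\hat{f}$.

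For the converse, suppose $f$ is $S$-Willmore with dual surface $\hat{f}$, let $V$ be the central sphere congruence of $f$ and set $U := V \cap (f \oplus \hat{f})^{\perp}$. Since $\hat{f}$ envelopes $V$, Remark~\ref{rem:Aobs} gives $A = 0$. The weak conformal equivalence of $\hat{f}$ and $f$ with matching orientation translates, through the type decomposition of $\hat{\beta}$, into $Q \equiv 0$. Substituting $A = 0$ and $Q = 0$ into the Codazzi-type identity~\eqref{eqn:DQDhatbeta0AII} yields $d^{D}\hat{\beta}_{0} = 0$. Lemma~\ref{lem:harmcond} then produces harmonicity of $U^{\perp}$, and Lemma~\ref{lem:confUf} produces conformality.

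The only subtle point I foresee is the dictionary between the algebraic condition $Q \equiv 0$ and the geometric condition ``$\hat{f}$ is weakly conformally equivalent to $f$ with the same orientation''. This requires carefully unpacking the bi-degree decomposition of $\hat{\beta}$ with respect to both $T^{1,0}\Sigma \oplus T^{0,1}\Sigma$ and $U_{+} \oplus U_{-}$, and checking that the orientation choice fixed after~\eqref{eqn:weylsplit} aligns with Ejiri's convention. Once this identification is established, both directions become a routine matter of invoking the two lemmas and the Codazzi equation~\eqref{eqn:DQDhatbeta0AII}.
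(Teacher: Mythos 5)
Your proposal is correct and follows essentially the same route as the paper: in both directions it reduces the statement to $A=0$, $Q\equiv 0$, $d^{D}\hat{\beta}_{0}=0$ via Lemma~\ref{lem:harmcond} and Lemma~\ref{lem:confUf}, identifies $Q\equiv 0$ with the weak conformal equivalence and orientation condition on $\hat{f}$ (Ejiri's $S$-Willmore data), and in the converse extracts $d^{D}\hat{\beta}_{0}=0$ from~\eqref{eqn:DQDhatbeta0AII}. The only cosmetic difference is that you invoke~\eqref{eqn:Vconstrained} explicitly for the Willmore property where the paper simply refers back to the $q=0$ case of that identity.
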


If, in addition to $q=0$, we have that $\hat{\beta}_{0}\equiv 0$, then $\hat{f}$ is constant. We may then choose a constant non-zero lightlike vector $\mathfrak{q}\in \Gamma \hat{f}$. This defines a pseudo-Euclidean space form $\mathfrak{Q}^{p,q}$ via~\eqref{eqn:spaceform} and $V$ corresponds to the tangent plane congruence of $f$. Since $V$ is also the central sphere congruence, it follows that $f$ is a minimal surface (see~\cite{BC2010i}). Conversely, if $f$ projects to a minimal surface in a pseudo-Euclidean space form $\mathfrak{Q}^{p,q}$ then the tangent plane congruence of this surface and the central sphere congruence coincide. Thus $\mathfrak{q}\in V$ and we define $\hat{f} = \langle \mathfrak{q}\rangle$. Since $\hat{f}$ is constant it follows that $\hat{\beta}_{0}=0$ and $q=0$. 

\begin{proposition}
\label{prop:minimal}
$\hat{f}$ envelopes $V$ with $q =\hat{\beta}_{0}=0$ if and only if $f$ projects to a minimal surface in a pseudo-Euclidean space form with space form vector $\mathfrak{q}\in \Gamma \hat{f}$. 
\end{proposition}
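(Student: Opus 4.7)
The plan is to establish both implications by analysing the splitting~\eqref{eqn:weylsplitcon} applied to a section of $\hat f$.

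For the forward direction, I would first observe that $q=0$ together with non-degeneracy of $\beta$ (which holds as $f$ is an immersion) and the identification $q(X,Y)=-(\beta(Y),Q(X))$ force $Q=0$; combined with $\hat{\beta}_0=0$ this gives $\hat\beta=0$. By Remark~\ref{rem:Aobs}, $\hat f$ enveloping $V$ is equivalent to $A=0$. Applying the splitting to any $\hat F\in\Gamma\hat f$, the orthogonality relations $\hat f\perp U$ and $\hat f\perp V^\perp$ together with nullness of $\hat f$ reduce $d\hat F$ to $D\hat F$; writing $D\hat F=\alpha\hat F$ for a real $1$-form $\alpha$, the task becomes showing $d\alpha=0$. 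My approach is to invoke the Gauss equation~\eqref{eqn:curvD}, which now reads $R^D=-\frac{1}{2}[\mathrm{I\!I}\wedge\mathrm{I\!I}]$, and to compute, using $U\perp V^\perp$, that $[\mathrm{I\!I}\wedge\mathrm{I\!I}]$ takes values in $(U\wedge U)\oplus(V^\perp\wedge V^\perp)$, so that it annihilates $\hat F$. This yields $R^D\hat F=0$, hence a locally constant section $\mathfrak q$ generating $\hat f$. Then~\eqref{eqn:spaceform} supplies the pseudo-Euclidean space form $\mathfrak Q^{p,q}$, and recognising $V=f^{(1)}\oplus\langle\mathfrak q\rangle$ as the tangent plane congruence of the projection $\pi\circ f$ in $\mathfrak Q^{p,q}$, one concludes via the standard minimality criterion (see~\cite{BC2010i}) that $\pi\circ f$ is minimal.

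For the converse, I would begin with the minimal projection of $f$ with space form vector $\mathfrak q$, observe that the tangent plane congruence $f^{(1)}\oplus\langle\mathfrak q\rangle$ coincides with the central sphere congruence $V$, and set $\hat f:=\langle\mathfrak q\rangle$ and $U:=f^{(1)}\cap\mathfrak q^\perp$ to furnish a Weyl structure; constancy of $\mathfrak q$ makes $\hat f^{(1)}=\hat f\le V$ automatic. Applying $d$ to $\mathfrak q$ and decomposing along~\eqref{eqn:weylsplit}, the $U$-component yields $\hat\beta\mathfrak q=(Q+\hat\beta_0)\mathfrak q=0$, which upon splitting by type using $U_+\cap U_-=0$ forces both $Q=0$ and $\hat\beta_0=0$.

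The main obstacle I anticipate is in the forward direction, specifically in producing the constant section $\mathfrak q$: this reduces to verifying flatness of the induced connection on the rank-one bundle $\hat f$. The key algebraic input is that $[\mathrm{I\!I}\wedge\mathrm{I\!I}]$ acts trivially on $\hat f$, so the Gauss equation delivers $d\alpha=0$. Once this is established, the identifications with the tangent plane congruence and the minimality criterion from space form geometry are routine, and the converse then follows by unpacking the constancy of $\mathfrak q$ via~\eqref{eqn:weylsplit}.
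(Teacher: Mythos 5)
Your proposal is correct and follows the same overall scheme as the paper: reduce the hypotheses to $Q=\hat\beta_0=A=0$, conclude that $\hat f$ is constant, identify $V$ with the tangent plane congruence and invoke the minimality criterion; and for the converse, decompose $d\mathfrak q=0$ via~\eqref{eqn:weylsplit} and split by type to recover $A=Q=\hat\beta_0=0$.

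The one place you take a detour is in establishing constancy of $\hat f$. Invoking the Gauss equation~\eqref{eqn:curvD} to show $R^D\hat F=-\tfrac12[\mathrm{I\!I}\wedge\mathrm{I\!I}]\hat F=0$ is correct, but it is not needed: once $A=0$ and $\hat\beta=Q+\hat\beta_0=0$, the splitting~\eqref{eqn:weylsplitcon} gives $d\hat F=D\hat F\in\Omega^1(\hat f)$ for every $\hat F\in\Gamma\hat f$, so $\hat f$ is a $d$-parallel line subbundle of the trivial (hence flat) bundle $\underline{\mathbb{R}}^{p+1,q+1}$ and is therefore locally constant. Equivalently, in your notation, $0=d^2\hat F=d(\alpha\hat F)=(d\alpha)\hat F$ already forces $d\alpha=0$, with no appeal to~\eqref{eqn:curvD}. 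The Gauss-equation computation is thus consistent but superfluous; the paper's one-line assertion ``$\hat f$ is constant'' is the direct observation. Everything else in your argument — including the type-splitting $\hat\beta\mathfrak q=0\Rightarrow Q=\hat\beta_0=0$ using $U_+\cap U_-=0$ in the converse — matches the intended proof.
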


\subsection{Degenerate $q$}
Suppose that $q$ is non-zero and degenerate. Then we have already seen that this implies that $f$ is a a quasi-umbilical surface. 

Conversely, suppose that $f$ is a quasi-umbilical surface. Then $W:=\mathcal{N}^{V}(T\Sigma)V^{\perp}$ is a rank 1 null subbundle of $V$. Now $W^{\perp}\cap V$ has signature $(1,1,1)$ (i.e. a rank $3$ bundle spanned by an orthogonal basis consisting of a spacelike, timelike and degenerate vector field) and thus there exists a rank 1 null subbundle $\hat{f}\le W^{\perp}$ that only depends on 1 parameter and is complementary to $W$ and $f$. Then, since $\mathcal{N}^{V}\hat{f} = 0$, $\hat{f}$ envelopes $V$. Let $U:=f\oplus \hat{f}$. Then, since $\hat{f}$ only depends on 1-parameter, it follows that $d^{D}\hat{\beta}_{0}=d^{D}Q =0$, and thus $U^{\perp}$ is harmonic by Lemma~\ref{lem:harmcond}. 

\begin{proposition}
$U^{\perp}$ is a harmonic congruence of codimension 2-spheres with respect to the conformal structure of an orthogonal surface $f$ with $q$ degenerate if and only if $f$ is a quasi-umbilical surface. 
\end{proposition}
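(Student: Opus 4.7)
My plan is to prove each direction of the equivalence separately, building on Lemma~\ref{lem:harmcond} (harmonicity of $U^{\perp}$ is equivalent to $A=0$ together with $d^{D}Q=d^{D}\hat{\beta}_{0}=0$) and Lemma~\ref{lem:QIIvan} (linking degeneracy of $Q$ to the vanishing of one of $\textrm{I\!I}^{1,0}$ or $\textrm{I\!I}^{0,1}$).

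For the forward direction, assume $U^{\perp}$ is harmonic and $q$ is nonzero and degenerate. As noted in the preceding remark, degeneracy of $q$ can only occur in the timelike case, and we may assume, after swapping $T^{1,0}\Sigma\leftrightarrow T^{0,1}\Sigma$ if necessary, that $Q^{0,1}=0$ and $Q^{1,0}\neq 0$. Lemma~\ref{lem:QIIvan} then forces $\textrm{I\!I}^{1,0}=0$, so $\textrm{I\!I}=\textrm{I\!I}^{0,1}$ takes values in the rank-$1$ null bundle $U_{+}\wedge V^{\perp}$. Combined with $A=0$ from Lemma~\ref{lem:harmcond}, this yields $\mathcal{N}^{V}(T\Sigma)V^{\perp}\subseteq U_{+}$, a rank-$1$ null subbundle of $V$, so by definition $f$ is quasi-umbilical.

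For the converse, suppose $f$ is quasi-umbilical and write $W:=\mathcal{N}^{V}(T\Sigma)V^{\perp}$, a rank-$1$ null subbundle of $V$. The plan is to construct a second enveloping surface $\hat{f}$ of $V$ that depends on a single null coordinate, and then set $U:=f\oplus\hat{f}$. Inside the rank-$3$ bundle $W^{\perp}\cap V$, which has signature $(1,1,1)$ and contains both $f$ and $W$, there is a 1-parameter family of null lines complementary to $f\oplus W$; I would select $\hat{f}$ within this family so that it is constant along one of the two null directions of $\Sigma$. The enveloping condition $\mathcal{N}^{V}\hat{f}\equiv 0$ is then automatic, since $\hat{f}\subseteq W^{\perp}$ and the image of $\mathcal{N}^{V}|_{V^{\perp}}$ is exactly $W$; by Remark~\ref{rem:Aobs}, this is equivalent to $A=0$.

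To complete the proof I would verify the remaining harmonicity conditions $d^{D}Q=0$ and $d^{D}\hat{\beta}_{0}=0$ and then invoke Lemma~\ref{lem:harmcond}. The 1-parameter dependence of $\hat{f}$ kills one of the $(1,0)$ or $(0,1)$ components of each of $Q$ and $\hat{\beta}_{0}$, and the surviving components are closed under $d^{D}$ thanks to~\eqref{eqn:DQDhatbeta0AII} and~\eqref{eqn:IIhatbeta0} together with $A=0$. The main obstacle is really the construction of $\hat{f}$ itself: the abstract 1-parameter freedom of null complements inside $W^{\perp}\cap V$ must be matched, smoothly on an open dense subset, to one of the null foliations of $\Sigma$, and it is here that the timelike hypothesis (existence of two genuine null directions in $T\Sigma$) is used essentially. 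Everything after $\hat{f}$ is selected reduces to bookkeeping within the Weyl splitting~\eqref{eqn:weylsplit}.
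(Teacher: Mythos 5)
Your proof follows exactly the paper's route (forward direction via Lemma~\ref{lem:QIIvan}; converse by building a second envelope $\hat{f}$ inside $W^{\perp}\cap V$ that depends on only one null coordinate and then invoking Lemma~\ref{lem:harmcond}), so the overall strategy is the same. However, there is a component flip in the forward direction that you should fix: with $Q^{0,1}=0$ and $Q^{1,0}\neq 0$, Lemma~\ref{lem:QIIvan} (the ``respectively'' branch) gives $\textrm{I\!I}^{0,1}=0$, not $\textrm{I\!I}^{1,0}=0$, so $\textrm{I\!I}=\textrm{I\!I}^{1,0}$ takes values in $U_{-}\wedge V^{\perp}$ rather than $U_{+}\wedge V^{\perp}$. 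The conclusion ($\mathcal{N}^{V}$ has rank one, hence $f$ is quasi-umbilical, using $A=0$ from Lemma~\ref{lem:harmcond}) is unaffected, but as written the chain of implications is internally inconsistent.

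On the converse, your mechanism for getting $d^{D}Q=d^{D}\hat{\beta}_{0}=0$ is sound and actually more explicit than what the paper writes: once one of the $(1,0)$/$(0,1)$ components of $\hat{\beta}=Q+\hat{\beta}_{0}$ vanishes, $d^{D}Q$ and $d^{D}\hat{\beta}_{0}$ land in complementary subbundles (one in $\Omega^{2}(f\wedge U_{-})$, the other in $\Omega^{2}(f\wedge U_{+})$), so $d^{D}Q+d^{D}\hat{\beta}_{0}=[A\wedge\textrm{I\!I}]=0$ from~\eqref{eqn:DQDhatbeta0AII} with $A=0$ forces each summand to vanish separately. The appeal to~\eqref{eqn:IIhatbeta0} in that step is extraneous; it is not needed. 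You are right to flag the construction of the one-parameter $\hat{f}$ as the real content of the converse; the paper is equally terse at this point and simply asserts existence, so you are not missing anything the paper supplies, but a fully self-contained proof would still owe the reader an argument that a null complement of $f\oplus W$ in $W^{\perp}\cap V$ can in fact be chosen constant along one null foliation of $\Sigma$.
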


When $\hat{\beta}_{0} \equiv 0$, we have that $\hat{\beta}=Q$. This implies that $\hat{\beta}\hat{f} = \textrm{I\!I}V^{\perp}$. Such surfaces are called \textit{exceptional quasi-umbilical surfaces}, see~\cite{MPwip}. They satisfy the property that $V^{\perp}$ belongs to a constant $(p+q)$-dimensional subspace of $\underline{\mathbb{R}}^{p+1,q+1}$. Namely, if $q^{1,0}\equiv 0$, then this subspace is given by
\[ W:= U_{+}\oplus \hat{f}\oplus V^{\perp}.\]

\begin{proposition}
\label{prop:excquasi}
$\hat{f}$ envelopes $V$ with $q$ degenerate and $\hat{\beta}_{0}=0$ if and only if $f$ is an exceptional quasi-umbilical surface. 
\end{proposition}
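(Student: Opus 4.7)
The plan is to work inside the splitting~\eqref{eqn:weylsplit} and the decomposition $\hat\beta = Q + \hat\beta_0$, and to prove the two implications separately. The forward direction is nearly set up by the discussion preceding the proposition; the reverse direction requires translating the constant-subspace characterisation of exceptional quasi-umbilical back into vanishing statements for the connection components.

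\textbf{Forward direction.} Assume $\hat f$ envelopes $V$ (so $A=0$), $q$ is degenerate, and $\hat\beta_0=0$. After possibly swapping the roles of $U_+$ and $U_-$ by reversing orientation, arrange $Q^{1,0}=0$. Then $\hat\beta = Q = Q^{0,1}\in\Omega^{0,1}(f\wedge U_+)$, so $\hat\beta\hat f\subseteq U_+$. At any point where $Q^{0,1}\ne 0$, Lemma~\ref{lem:QIIvan} gives $\textrm{I\!I}^{1,0}=0$, so $\textrm{I\!I}V^\perp\subseteq U_+$; as both $\hat\beta\hat f$ and $\textrm{I\!I}V^\perp$ are rank one subbundles of $U_+$, they coincide, which is the algebraic characterisation $\hat\beta\hat f = \textrm{I\!I}V^\perp$ given in the paragraph before the proposition. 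Setting $W := U_+\oplus\hat f\oplus V^\perp$, I verify $dW\subseteq W$ by applying~\eqref{eqn:weylsplitcon} to sections of each summand: the terms that could escape $W$ are $A\hat f$, $AV^\perp$ (zero by $A=0$), $\hat\beta U_+$ (zero because $(U_+,U_+)=0$ together with $Q^{1,0}=0=\hat\beta_0^{0,1}$), $Q^{1,0}\hat f$, $\hat\beta_0^{0,1}\hat f$ (zero by hypothesis), and $\textrm{I\!I}^{1,0}V^\perp$ (zero by the above). Hence $W$ is a constant $(p+q)$-dimensional subspace of $\underline{\mathbb{R}}^{p+1,q+1}$ containing $V^\perp$, exhibiting $f$ as exceptional quasi-umbilical in the sense of~\cite{MPwip}.

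\textbf{Reverse direction.} Assume $f$ is exceptional quasi-umbilical, i.e. quasi-umbilical with $V^\perp$ contained in a constant $(p+q)$-dimensional subspace $W\subseteq\underline{\mathbb{R}}^{p+1,q+1}$. From the preceding proposition one obtains $\hat f$ enveloping $V$ (so $A=0$) with $q$ degenerate; arrange $Q^{1,0}=0$, so Lemma~\ref{lem:QIIvan} yields $\textrm{I\!I}^{1,0}=0$ and $\textrm{I\!I}V^\perp\subseteq U_+$. Since $W$ is constant, $dV^\perp\subseteq W$, and the nonzero $\textrm{I\!I}$-part of $dV^\perp$ forces $U_+\subseteq W$; a dimension count then identifies $W = U_+\oplus\hat f\oplus V^\perp$. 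Imposing $d\hat f\subseteq W$ via~\eqref{eqn:weylsplitcon} and extracting the $U_-$-component of $\hat\beta\hat f$ forces $\hat\beta_0^{0,1}=0$ (since $Q^{1,0}$ already vanishes). The remaining vanishing $\hat\beta_0^{1,0}=0$ follows by combining the identity $\hat\beta\hat f = \textrm{I\!I}V^\perp$ (so the $U_+$-part of $\hat\beta\hat f$ is pinned down by $\textrm{I\!I}V^\perp$, which under Proposition~\ref{prop:csc} decomposes only through $Q^{0,1}$) with~\eqref{eqn:DQDhatbeta0AII} under the constraints $A=0$, $Q^{1,0}=0$, $\hat\beta_0^{0,1}=0$.

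The main obstacle lies in this last step of the reverse direction. The $U_-$-valued components of $\hat\beta\hat f$ are forced to vanish simply by the requirement $\hat\beta\hat f\subseteq U_+$, but $\hat\beta_0^{1,0}\hat f$ already lands inside $U_+\subseteq W$, so the constancy of $W$ alone does not kill it. Separating the two $U_+$-valued contributions $Q^{0,1}\hat f$ and $\hat\beta_0^{1,0}\hat f$ — either through the sharper form of the defining identity $\hat\beta\hat f = \textrm{I\!I}V^\perp$ or through the integrability relations~\eqref{eqn:curvD}--\eqref{eqn:DII} — is the delicate part of the argument.
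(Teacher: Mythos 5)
Your forward direction is correct and actually fills in details the paper leaves implicit: assuming $\hat\beta_0=0$, $A=0$, and WLOG $Q^{1,0}=0$, you check summand by summand that $W=U_+\oplus\hat f\oplus V^\perp$ is $d$-stable, hence constant, and that $\hat\beta\hat f=\textrm{I\!I}V^\perp$ holds. The paper itself does not spell out this stability check, so this part is a genuine elaboration of the paper's discussion.

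The reverse direction has a real gap, and you flag it honestly but do not close it. As you observe, constancy of $W$ only kills the part of $d\hat F$ landing outside $W$, i.e. the $U_-$-component $\hat\beta_0^{0,1}\hat F$; the $U_+$-component $\hat\beta_0^{1,0}\hat F$ is already inside $W$, so constancy says nothing about it. Likewise the subbundle equality $\hat\beta\hat f=\textrm{I\!I}V^\perp$ cannot distinguish $Q^{0,1}\hat f$ from $\hat\beta_0^{1,0}\hat f$ once both lie in the line bundle $U_+$, so it pins down $\hat\beta_0^{0,1}=0$ but not $\hat\beta_0^{1,0}=0$. You gesture at the structure equations~\eqref{eqn:curvD}--\eqref{eqn:DII}, but do not identify which one to invoke or how, and I do not see how any of them forces $\hat\beta_0^{1,0}=0$ in this situation: equation~\eqref{eqn:DQDhatbeta0AII} with $A=0$ gives $d^D(Q+\hat\beta_0)=0$, which constrains derivatives of $\hat\beta_0^{1,0}$, not its pointwise vanishing. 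As written your argument does not establish the reverse implication.

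The source of the difficulty is that the paper gives no self-contained proof of this proposition; the notion of exceptional quasi-umbilical surface is imported wholesale from the reference \cite{MPwip}, and the paragraph preceding the proposition is a one-directional derivation of consequences of $\hat\beta_0=0$ rather than a characterisation. Your reverse argument substitutes the constant-subspace property for the (unavailable) definition, and as your own analysis shows, that property is strictly weaker: it only controls $\hat\beta_0^{0,1}$. Without knowing the precise definition in \cite{MPwip}, the missing implication $\hat\beta_0^{1,0}=0$ cannot be recovered from what is in this paper, and your proof does not recover it.
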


\subsection{Non-degenerate $q$}
In~\cite{BPP2002} the surfaces in $S^{3}$ that are simultaneously isothermic and constrained Willmore with respect to the same quadratic differential $q$ are characterised as constant mean curvature surfaces in 3-dimensional Riemannian space forms (see also~\cite{BPP2008,R1997}). In this subsection we shall see that an analogous result is true in the $(p,q)$-sphere, yielding a characterisation of orthogonal surfaces of a harmonic congruence of $2$-spheres when $q$ is non-degenerate. 

Suppose that $U^{\perp}$ is a harmonic sphere congruence with $q$ non-degenerate. Since this implies that $f$ is isothermic, we have that $V^{\perp}$ is flat. 

\begin{lemma} 
\label{lem:IIbracket}
One may write
\[ \textrm{I\!I} = [Q, \xi],\]
for some section $\xi\in \Gamma (\hat{f}\wedge V^{\perp})$. 
\end{lemma}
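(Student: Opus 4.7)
The plan is to construct $\xi$ explicitly in a local frame and then read off the identity $\textrm{I\!I}=[Q,\xi]$ from a direct bracket computation. First I would choose local non-vanishing sections $F\in\Gamma f$ and $\hat F\in\Gamma\hat f$ normalised by $(F,\hat F)=-1$, together with null sections $u_\pm\in\Gamma U_\pm$ with $(u_-,u_+)\neq 0$. Since $f,\hat f$ and $U_\pm$ are all rank~$1$, this fixes trivialisations of $f\wedge U_\pm$ and $U_\pm\wedge V^\perp$, and I may write
\[ Q^{1,0}=\omega^{1,0}\,F\wedge u_-,\quad Q^{0,1}=\omega^{0,1}\,F\wedge u_+,\quad \textrm{I\!I}^{1,0}=u_-\wedge\eta^{1,0},\quad \textrm{I\!I}^{0,1}=u_+\wedge\eta^{0,1},\]
with $\omega^{1,0},\omega^{0,1}$ scalar $1$-forms (nowhere zero by non-degeneracy of $q$) and $\eta^{1,0},\eta^{0,1}$ taking values in $V^\perp$. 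Any section of $\hat f\wedge V^\perp$ has the form $\hat F\wedge\tau$ for some $\tau\in\Gamma V^\perp$, so the whole problem reduces to finding the correct $\tau$.

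Next I would compute $[Q,\hat F\wedge\tau]$ using the identity $[a\wedge b,c\wedge d]=(a,c)b\wedge d-(a,d)b\wedge c-(b,c)a\wedge d+(b,d)a\wedge c$. The orthogonality relations built into the splitting~\eqref{eqn:weylsplit}, namely $f\perp(U\oplus V^\perp)$, $\hat f\perp U$, $U\perp V^\perp$, together with $(F,\hat F)=-1$, kill every term except the $(F,\hat F)$-contribution, so that
\[[Q^{1,0},\hat F\wedge\tau]=-\omega^{1,0}\,u_-\wedge\tau,\qquad [Q^{0,1},\hat F\wedge\tau]=-\omega^{0,1}\,u_+\wedge\tau.\]
Matching these against $\textrm{I\!I}^{1,0}$ and $\textrm{I\!I}^{0,1}$ forces $\tau=-\eta^{1,0}/\omega^{1,0}$ on the one hand and $\tau=-\eta^{0,1}/\omega^{0,1}$ on the other, each ratio being a well-defined section of $V^\perp$ because numerator and denominator are $1$-forms of the same type on the complex rank-$1$ bundles $T^{1,0}\Sigma$ and $T^{0,1}\Sigma$.

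The crucial step is to check that these two prescriptions agree. Here the hypothesis $A\equiv 0$, established in the proof of Lemma~\ref{lem:harmcond}, is essential: the Codazzi equation~\eqref{eqn:DAQII} collapses to $[Q\wedge\textrm{I\!I}]=0$, whose only non-trivial type part is the $(1,1)$-component $[Q^{1,0}\wedge\textrm{I\!I}^{0,1}]+[Q^{0,1}\wedge\textrm{I\!I}^{1,0}]=0$. Expanding each bracket with the same formula and orthogonality relations as before, and using $(u_-,u_+)\neq 0$, translates this identity into precisely $\omega^{1,0}\otimes\eta^{0,1}=\omega^{0,1}\otimes\eta^{1,0}$ as $V^\perp$-valued $(1,1)$-tensors, which is the compatibility required. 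With $\tau$ unambiguously defined, setting $\xi:=\hat F\wedge\tau$ and substituting into the bracket computation above yields $[Q,\xi]=\textrm{I\!I}$.

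The main obstacle I expect is recognising that the Codazzi identity is exactly the compatibility needed to glue the $(1,0)$ and $(0,1)$ prescriptions of $\tau$; once this is spotted, everything else is linear-algebra bookkeeping in the splitting~\eqref{eqn:weylsplit}. A minor loose end, easily dispatched, is to check that $\tau$ (and hence $\xi$) is independent of the rescalings of $F,\hat F,u_\pm$ compatible with $(F,\hat F)=-1$, so that the local construction assembles into a global section of $\hat f\wedge V^\perp$ on the open set where $q$ is non-degenerate.
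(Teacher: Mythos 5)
Your proposal is correct and takes essentially the same approach as the paper: both parametrise $Q$ and $\textrm{I\!I}$ in a local frame adapted to the type decomposition, reduce the Codazzi equation~\eqref{eqn:DAQII} with $A\equiv 0$ to $[Q\wedge\textrm{I\!I}]=0$, use this as the compatibility (proportionality) condition, and read off $\xi$ explicitly as a section of $\hat f\wedge V^\perp$. The only cosmetic difference is in ordering: you solve for the candidate $\tau$ separately from the $(1,0)$- and $(0,1)$-parts of the desired identity and then use $[Q\wedge\textrm{I\!I}]=0$ to glue, whereas the paper first derives $N_+=(\mu/\lambda)N_-$ from $[Q\wedge\textrm{I\!I}]=0$, rewrites $\textrm{I\!I}$, and then recognises it as a bracket with $\xi=-\tfrac{1}{\lambda}\hat F\wedge N_-$.
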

\begin{proof}
Fix two nowhere zero one forms $\omega_{+}\in \Omega^{0,1}(U_{+})$ and $\omega_{-}\in \Omega^{1,0}(U_{-})$. Then we may write 
\[ \textrm{I\!I} = \omega_{-}\wedge N_{-} + \omega_{+}\wedge N_{+}\]
for some sections $N_{+}, N_{-}\in \Gamma (V^{\perp}\otimes \mathbb{C})$. We may also write 
\[ Q = \lambda \, F\wedge \omega_{-} + \mu\, F\wedge \omega_{+},\]
for some nowhere zero functions $\lambda$ and $\mu$, and a nowhere zero section $F\in \Gamma f$. Now the condition $[Q\wedge \textrm{I\!I}]=0$ from~\eqref{eqn:DAQII} implies that 
\[ \lambda\, N_{+} - \mu\, N_{-} = 0.\]
Then $N_{+} = \frac{\mu}{\lambda} N_{-}$. Hence, 
\[ \textrm{I\!I} = \frac{1}{\lambda} (\lambda\, \omega_{-}\wedge N_{-} + \mu\, \omega_{+}\wedge N_{-}).\]
Therefore, $\textrm{I\!I} = [Q,\xi]$ with $\xi := -\frac{1}{\lambda}\hat{F}\wedge N_{-}$, where $\hat{F}\in \Gamma \hat{f}$ such that $(F,\hat{F})=-1$. 
\end{proof}

Assume that $\Sigma$ is umbilic-free, i.e., $\textrm{I\!I}$ is nowhere zero. By Lemma~\ref{lem:IIbracket}, there exists $\xi \in \Gamma(\hat{f}\wedge V^{\perp})$ such that $\textrm{I\!I} = [Q, \xi]$. Since we are only considering points where $\textrm{I\!I}$ is non-zero, we may write $\xi = \hat{F}\wedge N$ for some nowhere zero sections $\hat{F}\in \Gamma \hat{f}$ and $N\in \Gamma V^{\perp}$. Let $L:= \langle N\rangle$. By  \eqref{eqn:DII10} and \eqref{eqn:DQ10}, we deduce that 
\[0 = [Q^{1,0}\wedge d^{D}\xi].\]
Since $Q^{1,0}$ is nowhere zero, this implies that $(d^{D}\xi)^{0,1} = 0$. Similarly, using that $d^{D}Q^{0,1} = d^{D}\textrm{I\!I}^{0,1}=0$, we can deduce that $(d^{D}\xi)^{1,0} = 0$. Hence 
\begin{equation}
\label{eqn:Dxi}
0 = d^{D}\xi = d^{D}\hat{F}\wedge N + \hat{F}\wedge d^{D}N.
\end{equation}
Thus $d^{D}N \in \Omega^{1}(L)$ and $d^{D}\hat{F}\in \Omega^{1}(\hat{f})$. Hence, $L$ is a parallel subbundle of $V^{\perp}$. Since $D|_{V^{\perp}}$ is flat, there exists a section $\tilde{N}\in \Gamma L$ such that $d^{D}\tilde{N}=0$. After rescaling $\hat{F}$, we may assume that $N = \tilde{N}$. It then follows from~\eqref{eqn:Dxi} that $d^{D}\hat{F}=0$. In particular, this implies that $R^{D}|_{f\oplus \hat{f}}=0$. Moreover, $R^{D}|_{V^{\perp}}=0$ and, since $U^{\perp} = f\oplus \hat{f}\oplus V^{\perp}$, it follows that $U^{\perp}$ is flat.

Now set $W:= V\oplus \langle N\rangle$. This is a 5-dimensional subbundle of $\underline{\mathbb{R}}^{p,q}$. Since $\textrm{I\!I}\in \Omega^{1}(U\wedge \langle N\rangle)$ and $N$ is $D$-parallel, it follows that $W$ is constant. 

\begin{lemma}
\label{lem:constW}
Away from umbilic points we have that $U^{\perp}$ is flat, i.e., $U^{\perp}$ is a cyclic congruence\footnote{For more information on cyclic congruences, see~\cite{H1996}.}, and
$f$ takes values in a 5-dimensional constant subspace $W$ of $\underline{\mathbb{R}}^{p,q}$. 
\end{lemma}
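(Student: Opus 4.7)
The proof is essentially an assembly of the structural data gathered in the paragraph preceding the lemma into two complementary verifications: flatness of $U^{\perp}$ and constancy of a distinguished $5$-plane $W$.

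For the flatness of $U^{\perp}$ (equivalently, cyclicity), I would observe that $A=0$ forces the induced connection on $U^{\perp}$ to reduce to $D|_{U^{\perp}}$, and the orthogonal decomposition $U^{\perp}=f\oplus\hat{f}\oplus V^{\perp}$ is $D$-parallel. The curvature on each summand may be treated separately: $V^{\perp}$ is flat because $f$ is isothermic (invoked at the start of the subsection), $\hat{f}$ is flat because the discussion above produced a $D$-parallel section $\hat{F}\in\Gamma\hat{f}$, and $f$ is flat by metric compatibility of $D$. Indeed, writing $DF=\alpha F$ and $D\hat{F}=\hat{\alpha}\hat{F}$ for $1$-forms $\alpha,\hat{\alpha}$ and differentiating the pairing $(F,\hat{F})=-1$ yields $\alpha+\hat{\alpha}=0$, so $D\hat{F}=0$ forces $DF=0$. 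This covers all three summands.

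For the second assertion, I would set $W:=V\oplus\langle N\rangle$ and show that the trivial connection preserves $W$, which is precisely the statement that $W$ is a constant subbundle of $\underline{\mathbb{R}}^{p+1,q+1}$. Using \eqref{eqn:weylsplitcon} with $A=0$, a section of $V$ differentiates to an element of $\mathcal{D}^{V}V+\textrm{I\!I}(V)$. The explicit form of $\textrm{I\!I}$ exhibited in the proof of Lemma~\ref{lem:IIbracket} refines $\textrm{I\!I}\in\Omega^{1}(U\wedge V^{\perp})$ to $\textrm{I\!I}\in\Omega^{1}(U\wedge\langle N\rangle)$, so $\textrm{I\!I}(V)\subseteq\langle N\rangle\subseteq W$. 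For a section of $\langle N\rangle$, the components $\beta$, $\hat{\beta}$ and $A$ in the splitting all annihilate $V^{\perp}$, so $dN=DN+\textrm{I\!I} N$; the first summand vanishes by construction and the second lies in $U\subseteq V$. Both calculations confirm $dW\subseteq W$, so $W$ is constant of rank $4+1=5$, and since $f\le V\le W$ the surface takes values in $W$.

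The principal input beyond bookkeeping in the splitting \eqref{eqn:weylsplitcon} is the sharpening $\textrm{I\!I}\in\Omega^{1}(U\wedge\langle N\rangle)$; without it, $\textrm{I\!I}(V)$ could a priori spread through all of $V^{\perp}$ and $W$ would fail to close up under differentiation. Since this refinement and the $D$-parallelism of $N$ and $\hat{F}$ are already in hand from the preceding discussion, I do not expect any further obstacle.
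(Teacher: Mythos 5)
Your proof is correct and follows essentially the same approach as the paper: both establish flatness of $U^\perp$ from the $D$-parallelism of $\hat{F}$ (derived from $d^D\xi=0$) together with flatness of $V^\perp$, and both obtain constancy of $W=V\oplus\langle N\rangle$ from the sharpening $\textrm{I\!I}\in\Omega^1(U\wedge\langle N\rangle)$ and $DN=0$. The only cosmetic differences are that you pass via metric compatibility to get $DF=0$ (the paper deduces $R^D|_{f\oplus\hat{f}}=0$ directly from $R^D\hat{F}=0$, using that $\mathfrak{so}(f\oplus\hat{f})$ is one-dimensional), and your phrase ``orthogonal decomposition'' for $f\oplus\hat{f}\oplus V^\perp$ is slightly loose since $f$ and $\hat{f}$ pair non-degenerately rather than orthogonally; neither affects the substance.
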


Since $f\oplus \hat{f}$ is flat, it follows from~\eqref{eqn:curvD} that $[\beta\wedge \hat{\beta}_{0}]|_{f\oplus \hat{f}} =0$. We can then deduce that 
\begin{equation}
\label{eqn:hatbeta0mu}
\hat{\beta}_{0}\hat{F} = \mu\, \beta F,
\end{equation}
for some function $\mu$ and $F\in\Gamma f$ such that $(\hat{F},F)=-1$. Note that since $\hat{F}$ is $D$-parallel, it follows that $F$ is $D$-parallel. By~\eqref{eqn:Dbeta} and~\eqref{eqn:Dhatbeta0} we have that  
\[ 0 = d^{D}(\hat{\beta}_{0}\hat{F}) = d^{D}(\beta F).\]
Hence, $\mu$ is constant. 

Since $N$ is $D$-parallel and $D$ is a metric connection, we have that $(N,N)$ is constant. Assume that $(N,N)\not\equiv 0$ or, equivalently, $(\textrm{I\!I},\textrm{I\!I})\not\equiv 0$. Without loss of generality, assume that $(N,N)=\pm 1$. Set 
\[ \mathfrak{q} := \hat{F} -\mu \, F - (N,N) N.\]
Then
\begin{align*}
d\mathfrak{q} &= d\hat{F} - \mu dF - (N,N) dN \\
&=   -Q\hat{F} + \hat{\beta}_{0}\hat{F} - \mu\,\beta F - (N,N)\textrm{I\!I}N\\
&=  -Q\hat{F} - (N,N)[Q,\hat{F}\wedge N]N\\
&= -Q\hat{F} + (N,N)^{2} Q\hat{F}\\
&= 0,
\end{align*}
since $(N,N)=\pm 1$. We then conclude from \cite{BPP2002} that $f$ has constant mean curvature $H = - (N,\mathfrak{q}) = 1$ in the space form defined by $\mathfrak{q}$ with sectional curvature $\kappa = - (\mathfrak{q},\mathfrak{q}) = - 2\mu - 1$. We thus arrive at the following lemma:

\begin{lemma}
\label{lem:harmCMC}
Assume that $(\textrm{I\!I},\textrm{I\!I})\not\equiv 0$. Then $f$ projects to a surface of constant mean curvature in a 3-dimensional space form. 
\end{lemma}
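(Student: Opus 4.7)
The plan is to produce a constant vector $\mathfrak{q}\in\Gamma\underline{\mathbb{R}}^{p+1,q+1}$ so that, via the construction~\eqref{eqn:spaceform}, $f$ projects into the associated space form $\mathfrak{Q}^{p,q}$, and then to read off the mean curvature directly from the defining data. All the ingredients are now in place: the sections $F\in\Gamma f$, $\hat{F}\in\Gamma\hat{f}$ and $N\in\Gamma V^{\perp}$ constructed above are $D$-parallel with $(F,\hat{F})=-1$; the constant $(N,N)$ may, under the hypothesis $(\mathrm{I\!I},\mathrm{I\!I})\not\equiv 0$, be normalised to $\pm 1$; and $\hat{\beta}_{0}\hat{F}=\mu\,\beta F$ with $\mu$ a constant.

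Following the template from~\cite{BPP2002}, I would try the ansatz $\mathfrak{q}:=\hat{F}-\mu F-(N,N)\,N$. Applying the splitting $d=D-\beta-Q-\hat{\beta}_{0}+\mathrm{I\!I}+A$ term by term, the $D$-derivatives vanish by parallelism, $A\equiv 0$ by harmonicity (Lemma~\ref{lem:harmcond}), and~\eqref{eqn:hatbeta0mu} cancels the $\hat{\beta}_{0}\hat{F}$ contribution against the $\beta F$ contribution. What remains is
\[ d\mathfrak{q}=-Q\hat{F}-(N,N)\,\mathrm{I\!I}\, N. \]

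The crux is then a short bracket identity in $\wedge^{2}\mathbb{R}^{p+1,q+1}$: substituting $\mathrm{I\!I}=[Q,\hat{F}\wedge N]$ from Lemma~\ref{lem:IIbracket} and expanding via $(a\wedge b)c=(a,c)b-(b,c)a$, one finds, writing locally $Q(X)=F\wedge b$ with $b\in\Gamma U$, that $\mathrm{I\!I}(X)N=(N,N)\,b$ while $-Q(X)\hat{F}=b$. Consequently $d\mathfrak{q}=\bigl(1-(N,N)^{2}\bigr)(-Q\hat{F})$, which vanishes precisely because of the normalisation $(N,N)^{2}=1$. This is the unique place where the hypothesis $(\mathrm{I\!I},\mathrm{I\!I})\not\equiv 0$ is used, since it is what allows $N$ to be so rescaled in the first place.

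Once $\mathfrak{q}$ is constant, direct inner-product computations give $(\mathfrak{q},\mathfrak{q})=2\mu+(N,N)$ and $(N,\mathfrak{q})=-(N,N)^{2}=-1$. Invoking the characterisation of isothermic constrained Willmore surfaces sharing their multiplier as CMC surfaces from~\cite{BPP2002}, we conclude that $f$ projects to a surface of constant mean curvature $H=-(N,\mathfrak{q})=1$ in the 3-dimensional space form determined by $\mathfrak{q}$, whose sectional curvature is $\kappa=-(\mathfrak{q},\mathfrak{q})=-2\mu-(N,N)$. The only genuine obstacle is verifying the bracket identity cleanly; the rest is bookkeeping from the structural equations already in play.
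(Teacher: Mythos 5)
Your proof follows the paper's argument exactly: same ansatz $\mathfrak{q} := \hat{F} - \mu F - (N,N)N$, same invocation of $D$-parallelism, $A\equiv 0$, \eqref{eqn:hatbeta0mu}, and Lemma~\ref{lem:IIbracket} to show $d\mathfrak{q}=0$, and the same appeal to \cite{BPP2002} at the end. (If anything, you are slightly more careful than the paper about the sign: $\kappa = -2\mu - (N,N)$ rather than the paper's $\kappa = -2\mu - 1$, which holds as stated only when $(N,N)=1$; but this is a cosmetic slip in the paper, not a difference of method.)
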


We now wish to see the converse of Lemma~\ref{lem:harmCMC}. Assume that $(p+1,q+1)=(4-\epsilon,1+\epsilon)$ or $(p+1,q+1)=(3-\epsilon,2+\epsilon)$ and suppose that $f$ projects to a constant mean curvature surface $F$ in a 3-dimensional space form defined by $\mathfrak{q}$ with $H\equiv 1$. That is, there exists $N\in \Gamma V^{\perp}$ with $(N,N)=\pm 1$, $(\mathfrak{q},N)=-1$. Fix $U$ such that 
\[ U^{\perp} = f\oplus V^{\perp} \oplus\langle \mathfrak{q}\rangle.\]
 We then obtain a line bundle $\hat{f}$ such that $V\cap U^{\perp} = f\oplus\hat{f}$. We may now write 
\[ \mathfrak{q} = \hat{F} - \mu F - (N,N)N\]
for some constant $\mu$ with $\hat{F}\in \Gamma \hat{f}$ such that $(F,\hat{F})=-1$. As before, $U$ induces a splitting of the trivial connection $d=\mathcal{D}^{U} + \mathcal{N}^{U}$ with 
\[ \mathcal{D}^{U} = D + A \quad \text{and}\quad \mathcal{N}^{U} = - \beta -\hat{\beta} + \textrm{I\!I}.\]
Since $\mathfrak{q}\in U^{\perp}$ is constant, we have that $\mathcal{D}^{U}\mathfrak{q} =0$ and $\mathcal{N}^{U}\mathfrak{q}=0$. Hence, 
\begin{equation} 
\label{eqn:DUq}
0 = \mathcal{D}^{U}\mathfrak{q} = D\hat{F} - \mu DF - (N,N)DN + A\hat{F} - (N,N) AN.
\end{equation}
Since, $(N,N)=\pm 1$ and $V^{\perp}$ is a 1-dimensional bundle, it follows that $DN=0$. Hence, $A\hat{F} = 0$, from which we deduce that $A\equiv 0$. Moreover, we deduce from~\eqref{eqn:DUq} that $D\hat{F}=0$. Since $(F,\hat{F})=-1$, we then have that $DF=0$. 

On the other hand, $\mathcal{N}_{U}\mathfrak{q} = 0$ implies that 
\begin{equation} 
\label{eqn:NUq} 
0 = \mathcal{N}^{U}\mathfrak{q}  = \mu\beta F -(\hat{\beta}_{0}+Q)\hat{F}- (N,N) \textrm{I\!I}N.
\end{equation}
By evaluating the $(1,0)$ and $(0,1)$ parts of~\eqref{eqn:NUq}, we have that 
\begin{align}
\label{eqn:mubetaF} 0 &=  \mu\beta F -\hat{\beta}_{0} \hat{F}\\
\label{eqn:QhatF} 0 &= - Q\hat{F} - (N,N) \textrm{I\!I}N.
\end{align}
By~\eqref{eqn:Dbeta}, we have that $d^{D}\beta = 0$. Then since $F$ and $\hat{F}$ are $D$-parallel, we learn from~\eqref{eqn:mubetaF} that $d^{D}\hat{\beta}_{0} =0$. We then have by~\eqref{eqn:DQDhatbeta0AII} that $d^{D}Q=0$. Thus, by applying Lemma~\ref{lem:harmcond}, we deduce that $U^{\perp}$ is harmonic. If we assume that $\textrm{I\!I}$ is nowhere zero, then~\eqref{eqn:QhatF} implies that $Q$ is non-degenerate. 

\begin{remark}
Geometrically, $U^{\perp}$ represents the congruence of geodesics that intersect $F$ orthogonally, i.e., it is the congruence of normal geodesics. 
\end{remark}

We have thus arrived at the following proposition: 

\begin{proposition}
\label{prop:qnondeg}
$U^{\perp}$ is a harmonic congruence of codimension 2-spheres with respect to the conformal structure of an orthogonal surface $f$ with $q$ non-degenerate and $(\textrm{I\!I},\textrm{I\!I})\not\equiv 0$ if and only if $f$ projects to a constant mean curvature surface in a 3-dimensional space form with $U^{\perp}$ being the congruence of normal geodesics. 
\end{proposition}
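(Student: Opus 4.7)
The proof is essentially an assembly of the preceding analysis, so the plan is to organise the two implications cleanly and check that no case is lost.

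For the forward direction, suppose $U^{\perp}$ is harmonic with $q$ non-degenerate and $(\mathrm{I\!I},\mathrm{I\!I})\not\equiv 0$. Away from umbilics (a nowhere dense set, since $\mathrm{I\!I}$ is not identically null), Lemma~\ref{lem:IIbracket} furnishes a section $\xi = \hat{F}\wedge N\in\Gamma(\hat{f}\wedge V^{\perp})$ with $\mathrm{I\!I} = [Q,\xi]$. Bracketing the harmonic equations \eqref{eqn:DII10} and \eqref{eqn:DQ10} (and their $(0,1)$ counterparts) with $Q^{1,0}$ (respectively $Q^{0,1}$), and using that $Q$ is nowhere zero, forces $d^{D}\xi = 0$. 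Expanding this as $d^{D}\hat{F}\wedge N + \hat{F}\wedge d^{D}N = 0$ and using flatness of $D|_{V^{\perp}}$ allows us to rescale so that both $\hat{F}$ and $N$ are $D$-parallel, whence by Lemma~\ref{lem:constW} the bundle $W := V\oplus \langle N\rangle$ is a constant rank-$5$ subspace. The hypothesis $(N,N)\neq 0$ then permits the construction $\mathfrak{q} := \hat{F} - \mu F - (N,N)N$ (with $\mu$ the constant from \eqref{eqn:hatbeta0mu}); a direct differentiation using the splitting \eqref{eqn:weylsplitcon} and $\mathrm{I\!I} = [Q,\hat{F}\wedge N]$ gives $d\mathfrak{q} = 0$. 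Thus $f$ takes values in $W$, and by Lemma~\ref{lem:harmCMC} projects to a CMC-$1$ surface in the three-dimensional space form $\mathfrak{Q}$ defined by $\mathfrak{q}$. Since $N\in V^{\perp}$ is the unit normal and $\hat{F}\in\hat{f}$ is $D$-parallel, the fibre of $U^{\perp}$ at each point is the circle through $f$ orthogonal to $T F$ and passing through $\mathfrak{q}$, i.e.\ the normal geodesic.

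For the converse, start from a CMC-$1$ surface $F$ in a three-dimensional space form with space form vector $\mathfrak{q}$ and unit normal $N$, so that there is a constant $\mu$ with $\mathfrak{q} = \hat{F} - \mu F - (N,N)N$, and define $U$ by $U^{\perp} := f\oplus V^{\perp}\oplus \langle \mathfrak{q}\rangle$, which then determines $\hat{f}$ as the unique null complement to $f$ in $V\cap U^{\perp}$. Split the trivial connection via $U$ as $d = \mathcal{D}^{U} + \mathcal{N}^{U}$. The constancy of $\mathfrak{q}$ yields $\mathcal{D}^{U}\mathfrak{q}=0=\mathcal{N}^{U}\mathfrak{q}$; separating types (Section $V^{\perp}$ is rank one, so $DN=0$ is immediate, and the $V^{\perp}$-component of \eqref{eqn:DUq} then forces $A \equiv 0$ and $D\hat{F}=DF=0$) gives the algebraic identities \eqref{eqn:mubetaF} and \eqref{eqn:QhatF}. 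Since $F,\hat{F}$ are $D$-parallel and $d^{D}\beta = 0$ by \eqref{eqn:Dbeta}, equation \eqref{eqn:mubetaF} differentiates to $d^{D}\hat{\beta}_{0}=0$, and then \eqref{eqn:DQDhatbeta0AII} with $A=0$ gives $d^{D}Q=0$. Lemma~\ref{lem:harmcond} now concludes that $U^{\perp}$ is harmonic, and non-degeneracy of $q$ follows from \eqref{eqn:QhatF} because $\mathrm{I\!I}$ is nowhere zero.

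The main obstacle is bookkeeping rather than any single deep step: one has to keep track of how the $(1,0)$/$(0,1)$ decompositions of $\beta$, $\hat{\beta}_{0}$, $Q$ and $\mathrm{I\!I}$ interact under the harmonicity and Gauss--Codazzi--Ricci equations, and to verify that the vector $\mathfrak{q}$ built in the forward direction is exactly the space form vector one would assign to the constructed CMC surface (so that the value $H=1$ matches $H = -(N,\mathfrak{q})$). The conceptual content is concentrated in Lemma~\ref{lem:IIbracket}: it is precisely the compatibility $\mathrm{I\!I}=[Q,\xi]$ between the isothermic and constrained Willmore structures that forces $W$ to be constant and supplies the candidate space form vector.
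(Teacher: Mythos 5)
Your proof follows the paper's own argument essentially step for step: forward direction via Lemma~\ref{lem:IIbracket}, the bracketing of \eqref{eqn:DII10} and \eqref{eqn:DQ10} with $Q$ to get $d^{D}\xi=0$, the rescaling to $D$-parallel $\hat F$ and $N$, the construction of $\mathfrak{q}=\hat F-\mu F-(N,N)N$ with $d\mathfrak{q}=0$, and Lemma~\ref{lem:harmCMC}; converse direction via $\mathcal{D}^{U}\mathfrak{q}=\mathcal{N}^{U}\mathfrak{q}=0$, deducing $DN=0$, $A\equiv 0$, $D\hat F=DF=0$, and then $d^{D}\hat\beta_{0}=d^{D}Q=0$ to apply Lemma~\ref{lem:harmcond}. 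This is the same approach as the paper and is correct.
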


In the proof of Proposition~\ref{prop:qnondeg} we observe that $H \equiv 1$ and the sectional curvature $\kappa = -2\mu -1$. Thus, $H^{2} + \kappa = -2\mu$. When $\mu = 0$ we then have that $H^{2}+\kappa =0$ which corresponds to the case that $f$ projects to a CMC-$1$ surface in hyperbolic space $\mathbb{H}^{3}$, $\mathbb{H}^{2,1}$ or $\mathbb{H}^{1,2}$. Such surfaces can be constructed from holomorphic data, see~\cite{HMN2001}. On the other hand, when $\mu = 0$, we have that $\hat{\beta}_{0}=0$, i.e., $\hat{f}$ has the same conformal structure as $f$ and induces opposite orientation on $T\Sigma$. Thus, $\hat{f}$ is a Darboux transform of $f$ (see for example~\cite{BDPP2011}). 

\begin{proposition}
\label{prop:CMCholo}
Suppose that and $(\textrm{I\!I}, \textrm{I\!I})\not \equiv 0$. Then $\hat{f}$ envelopes $V$ with $q$ non-degenerate and $\hat{\beta}_{0}=0$ if and only if $f$ projects to a CMC-$1$ surface in a 3-dimensional space form with $\kappa = -1$. 
\end{proposition}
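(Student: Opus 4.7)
The plan is to derive this proposition directly from Proposition~\ref{prop:qnondeg} by tracking the scalar $\mu$ that appeared in its proof. The key observation is that $\mu$ simultaneously measures the failure of $\hat{\beta}_0$ to vanish (via equation~\eqref{eqn:hatbeta0mu}) and the deviation of the sectional curvature from $-1$ (via $\kappa = -2\mu - 1$), so the two conditions $\hat{\beta}_0 = 0$ and $\kappa = -1$ should be equivalent additions to the setting of Proposition~\ref{prop:qnondeg}.

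For the forward direction, assume $\hat{f}$ envelopes $V$, $q$ is non-degenerate, and $\hat{\beta}_0 = 0$. By Remark~\ref{rem:Aobs}, $\hat{f}$ enveloping $V$ gives $A = 0$; combining $A = 0$ with $\hat{\beta}_0 = 0$ in the Codazzi equation~\eqref{eqn:DQDhatbeta0AII} yields $d^{D}Q = 0$. All three conditions of Lemma~\ref{lem:harmcond} are therefore met and $U^{\perp}$ is harmonic. Since $(\textrm{I\!I}, \textrm{I\!I}) \not\equiv 0$, Proposition~\ref{prop:qnondeg} applies: $f$ projects to a CMC surface in a 3-dimensional space form with $H = 1$ and $\kappa = -2\mu - 1$, where $\mu$ is defined by~\eqref{eqn:hatbeta0mu}. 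Since $\beta F$ is nowhere zero (as $f$ is an immersion), the hypothesis $\hat{\beta}_0 = 0$ forces $\mu = 0$, hence $\kappa = -1$.

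For the converse, suppose $f$ projects to a CMC-$1$ surface in a 3-dimensional space form with $\kappa = -1$. Then $H = 1$ and $-2\mu - 1 = -1$ force $\mu = 0$. The construction from the proof of Proposition~\ref{prop:qnondeg} produces $U$ (and hence $\hat{f}$ via the Weyl structure) so that $U^{\perp}$ is harmonic and $q$ is non-degenerate; in particular $\hat{f}$ envelopes $V$ by Lemma~\ref{lem:harmcond}. Equation~\eqref{eqn:mubetaF} then reads $\hat{\beta}_0 \hat{F} = \mu\, \beta F = 0$, and since $\hat{F}$ and $\beta F$ are nowhere zero this yields $\hat{\beta}_0 = 0$.

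The proof is essentially a bookkeeping exercise once Proposition~\ref{prop:qnondeg} is in hand; the main conceptual point is the chain of equivalences $\hat{\beta}_0 = 0 \Leftrightarrow \mu = 0 \Leftrightarrow \kappa = -1$, which reduces the geometric statement about CMC-$1$ surfaces in ambient spaces of curvature $-1$ to the algebraic vanishing of the Weyl structure component $\hat{\beta}_0$. No genuinely new difficulty arises beyond what was already handled in Proposition~\ref{prop:qnondeg}; one must merely be careful about the roles of $F$, $\hat{F}$, $N$ and $\mathfrak{q}$, and verify that the $\mu$ defined by~\eqref{eqn:hatbeta0mu} is literally the one appearing in the expression for $\kappa$.
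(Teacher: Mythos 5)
Your proposal is correct and takes essentially the same approach as the paper: both directions reduce to the chain $\hat{\beta}_0 = 0 \iff \mu = 0 \iff \kappa = -1$, routed through Proposition~\ref{prop:qnondeg} and the identity $\kappa = -2\mu - 1$. The paper presents this somewhat informally in the paragraph preceding the proposition, while you spell out the two directions (harmonicity via Lemma~\ref{lem:harmcond} in the forward direction, equation~\eqref{eqn:mubetaF} in the converse), but the underlying argument is identical.
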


We now wish to study the case when $(\textrm{I\!I}, \textrm{I\!I})\equiv 0$. Note that, by Definition~\ref{def:superconformal}, $f$ is a superconformal surface. Suppose that $U^{\perp}$ is a harmonic sphere congruence with $q$ non-degenerate. Then by Lemma~\ref{lem:constW} we have that $f$ is contained in a constant $5$-dimensional space $W$. This space is degenerate and there exists a vector $\mathfrak{o}\in W$ such that $W\perp \mathfrak{o}$. 
Now $H:= U\oplus \mathcal{N}^{U}(T\Sigma)U$ is a constant 4-dimensional subspace of $W$ with signature $(3-\epsilon, 1+\epsilon)$. We may choose a complementary lightlike vector $\mathfrak{q}\in H^{\perp}$ with $(\mathfrak{o},\mathfrak{q})=-1$. Without loss of generality, we shall assume that $(p+1,q+1) = (4-\epsilon, 2+\epsilon)$, so that 
\[ \mathbb{R}^{p+1,q+1} = H\oplus_{\perp} \langle \mathfrak{o},\mathfrak{q}\rangle.\]
The projection~\eqref{eqn:oqproj} then yields an isometry between $H$ and $\mathfrak{Q}^{p,q}$ as in~\eqref{eqn:spaceform}. Since $dF \in \Omega^{1}(U)$, we may rescale $F$ so that $(F,\mathfrak{q})=-1$. Moreover, since $F\perp \mathfrak{o}$, $x:=\pi(F)$ takes values in the 3-dimensional lightcone $\mathcal{L}^{3}$ of $H$. Since $f\oplus \hat{f}$ is flat, we may choose sections $F\in \Gamma f$ and $\hat{F}\in \Gamma \hat{f}$ such that $(dF,\hat{F})=0$ and $(F,\hat{F})=-1$. Defining $x^{l} = 2\pi(\hat{F})$ we have that $x^{l}$ take values $\mathcal{L}^{3}$  and $\langle x,x^{l}\rangle$ is a flat bundle with $(x,x^{l})=-2$. Thus the ordered pair formed by $x$ and $x^{l}$ is a Legendrian submanifold $\Sigma\to \mathcal{L}^{3}\times \mathcal{L}^{3}$ in the sense of~\cite{I2009} and $x^{l}$ is a lightcone normal vector to $x$. Moreover, the condition~\eqref{eqn:hatbeta0mu} implies that $x$ is a surface of constant lightcone mean curvature $H_{l} = - \frac{\mu}{2}$. 

Conversely, suppose that $\mathfrak{o},\mathfrak{q}\in \mathbb{R}^{4-\epsilon, 2+\epsilon}$ and consider a surface of constant lightcone mean curvature $x$ in the 3-dimensional lightcone of $\langle \mathfrak{o},\mathfrak{q}\rangle^{\perp}$. Let $x^{l}$ denote the lightcone normal vector $x$. Then $x$ lifts to a surface $F:= x +\mathfrak{o}$ in $\mathfrak{Q}^{3-\epsilon,1+\epsilon}$ and one can check that $U:= dF(T\Sigma)$ gives rise to a codimension 2 harmonic sphere congruence $U^{\perp} = \langle x,x^{l},\mathfrak{o},\mathfrak{q}\rangle$ for which $f=\langle F\rangle$ is orthogonal. 

\begin{proposition}
$U^{\perp}$ is a harmonic congruence of codimension 2-spheres with respect to the conformal structure of an orthogonal surface $f$ with $q$ non-degenerate and $(\textrm{I\!I},\textrm{I\!I})\equiv 0$ if and only if $f$ projects to a surface of constant lightcone mean curvature in a 3-dimensional lightcone with $\hat{f}$ projecting to the lightcone normal vector of $f$. 
\end{proposition}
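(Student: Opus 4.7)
The plan is to mirror the proof of Proposition~\ref{prop:qnondeg}, now working inside the constant $5$-dimensional subspace $W$ provided by Lemma~\ref{lem:constW}, but taking advantage of the fact that the superconformality hypothesis $(\textrm{I\!I},\textrm{I\!I})\equiv 0$ forces $W$ to be degenerate. More precisely, the $D$-parallel section $N\in \Gamma V^{\perp}$ produced in Lemma~\ref{lem:constW} is now a constant null vector, so $W=V\oplus \langle N\rangle$ is degenerate and $N$ spans its one-dimensional radical; I would set $\mathfrak{o}:=N$, noting $\mathfrak{o}\in W$ and $W\perp \mathfrak{o}$.

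For the forward direction, I would next introduce the constant $4$-dimensional subspace $H:=U\oplus \mathcal{N}^{U}(T\Sigma)U\subset W$ of signature $(3-\epsilon,1+\epsilon)$, and pick a complementary lightlike $\mathfrak{q}\in H^{\perp}$ with $(\mathfrak{o},\mathfrak{q})=-1$. After reducing to the ambient signature $(p+1,q+1)=(4-\epsilon,2+\epsilon)$, the orthogonal projection~\eqref{eqn:oqproj} identifies the space form $\mathfrak{Q}^{3-\epsilon,1+\epsilon}$ with $H$. Rescaling $F\in \Gamma f$ so that $(F,\mathfrak{q})=-1$, the image $x:=\pi(F)$ lies in the $3$-dimensional lightcone $\mathcal{L}^{3}$ of $H$ because $F\perp \mathfrak{o}$. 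Using the flatness of $f\oplus \hat{f}$ established in the non-degenerate $q$ analysis, I would select $\hat{F}\in \Gamma \hat{f}$ satisfying $(F,\hat{F})=-1$ and $(dF,\hat{F})=0$, and define $x^{l}:=2\pi(\hat{F})$, so that $(x,x^{l})$ becomes a Legendrian submanifold in the sense of~\cite{I2009} with $x^{l}$ a lightcone normal to $x$. Finally, I would translate~\eqref{eqn:hatbeta0mu}, recalling that the function $\mu$ appearing there has already been shown to be constant, into the statement that $x$ has constant lightcone mean curvature $H_{l}=-\mu/2$.

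For the converse, starting from $\mathfrak{o},\mathfrak{q}\in \mathbb{R}^{4-\epsilon,2+\epsilon}$ and a lightcone CMC surface $x$ in the lightcone of $\langle \mathfrak{o},\mathfrak{q}\rangle^{\perp}$ with lightcone normal $x^{l}$, I would lift $x$ to $F:=x+\mathfrak{o}\in \Gamma \mathfrak{Q}^{3-\epsilon,1+\epsilon}$ and take $f:=\langle F\rangle$, $U:=dF(T\Sigma)$, $U^{\perp}:=\langle x,x^{l},\mathfrak{o},\mathfrak{q}\rangle$. Orthogonality of $f$ with respect to $U^{\perp}$ is immediate from the definitions. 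To verify harmonicity, I would reverse the Gauss-Codazzi-Ricci bookkeeping of Proposition~\ref{prop:qnondeg}: the constancy of $\mathfrak{o},\mathfrak{q}$ together with the Legendrian structure of $(x,x^{l})$ forces $A=0$ and the $D$-parallelism of a natural frame of $f\oplus \hat{f}$; the constant lightcone mean curvature condition supplies~\eqref{eqn:hatbeta0mu} with constant $\mu$ and hence $d^{D}\hat{\beta}_{0}=0$, and then~\eqref{eqn:DQDhatbeta0AII} with $A=0$ forces $d^{D}Q=0$. Lemma~\ref{lem:harmcond} closes the argument, and non-degeneracy of $q$ follows from non-degeneracy of the lightcone second fundamental form of $x$.

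The main obstacle I expect is the bookkeeping required to match the intrinsic frame $(F,\hat{F},N)$ with the extrinsic Legendrian data $(x,x^{l},\mathfrak{o},\mathfrak{q})$: in particular, isolating the correct scalar multiple of $\pi(\hat{F})$ that qualifies as \emph{the} lightcone normal rather than an arbitrary normal direction, and verifying that the proportionality constant $\mu$ of~\eqref{eqn:hatbeta0mu} matches $-2H_{l}$ with the sign conventions of~\cite{I2009}. Once this translation is pinned down, the rest of the argument is a direct consequence of Lemma~\ref{lem:harmcond}, Lemma~\ref{lem:constW}, and the Gauss-Codazzi-Ricci equations~\eqref{eqn:Dbeta}--\eqref{eqn:DII}.
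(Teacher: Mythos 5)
Your proposal is correct and follows essentially the same route as the paper: both introduce the constant subspaces $W$ and $H$ from Lemma~\ref{lem:constW}, project $F$ and $\hat{F}$ into the $3$-dimensional lightcone of $H$ via~\eqref{eqn:oqproj}, read off the Legendrian pair $(x,x^{l})$ and the constant lightcone mean curvature $H_{l}=-\mu/2$ from~\eqref{eqn:hatbeta0mu}, and for the converse reverse the bookkeeping and close with Lemma~\ref{lem:harmcond}. The one genuine addition is that you make explicit that the $D$-parallel section $N$ is in fact a \emph{constant} null vector spanning the radical of $W$ (so one may take $\mathfrak{o}=N$), a fact the paper leaves implicit; this follows because $A\equiv 0$, $DN=0$, and $\textrm{I\!I}N=(\eta\wedge N)N=0$ when $(N,N)=0$.
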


This concludes the proof of Theorem~\ref{thm:orthharmonic}. 

\section{Bryant's quartic differential}
In this section we shall derive a quartic differential $\mathcal{Q}$ for a surface in the $(p,q)$-sphere, analogous to Bryant's quartic differential in $S^{3}$. We then investigate the vanishing of $\mathcal{Q}$ and when $\mathcal{Q}$ is divergence free, giving us generalisations of the results of~\cite{B2012,BP2009,V1985} to higher codimension and arbitrary signature.  

Let $V$ be the central sphere congruence of $f$ and recall that we may split the trivial connection as 
\[ d = \mathcal{D}^{V}+\mathcal{N}^{V},\]
where $\mathcal{D}^{V}$ is the sum of the induced connections on $V$ and $V^{\perp}$ and $\mathcal{N}^{V}\in \Omega^{1}(V\wedge V^{\perp})$. Define a quartic differential $\mathcal{Q}\in \Gamma ((T^{4,0}\Sigma)^{*}\oplus  (T^{0,4}\Sigma)^{*})$ by 
\[ \mathcal{Q}(Z,Z,Z,Z) := ( \mathcal{D}^{V}_{Z}\mathcal{N}_{Z}^{V},\mathcal{D}^{V}_{Z}\mathcal{N}^{V}_{Z}),\]
for $Z\in \Gamma T^{1,0}\Sigma$ or $Z\in \Gamma T^{0,1}\Sigma$. 

\begin{definition}
$\mathcal{Q}\in \Gamma ((T^{4,0}\Sigma)^{*}\oplus  (T^{0,4}\Sigma)^{*})$ is called \textit{Bryant's quartic differential}. Surfaces for which $V$ admits a second envelope and $\mathcal{Q}$ is divergence free are called \textit{Voss surfaces}\footnote{These are not to be confused with the ``surfaces of Voss'' studied by Eisenhart~\cite{E1914i}.}. 
\end{definition}

Suppose that $V$ admits a second envelope $\hat{f}$ and define $U := f\oplus\hat{f}$. Then in terms of the splitting of Section~\ref{sec:confgeom}, \eqref{eqn:VWeyl} implies that
\[ \mathcal{D}^{V} = D - \beta - Q- \hat{\beta}_{0} \quad \text{and}\quad \mathcal{N}^{V} = \textrm{I\!I}.\]
Suppose that $Z\in \Gamma T^{1,0}\Sigma$. Then $\mathcal{N}^{V}_{Z} =\textrm{I\!I}^{1,0}_{Z}\in \Gamma (U_{-}\wedge V^{\perp})$ and 
\[ \mathcal{D}^{V}_{Z}\mathcal{N}^{V}_{Z} = D_{Z}\textrm{I\!I}_{Z} - [\beta_{Z}, \textrm{I\!I}_{Z}] - [Q_{Z}, \textrm{I\!I}_{Z}]- [(\hat{\beta}_{0})_{Z},\textrm{I\!I}_{Z}].\]
Now $D_{Z}\textrm{I\!I}_{Z}\in \Gamma (U_{-}\wedge V^{\perp})$, $[\beta_{Z}, \textrm{I\!I}_{Z}] \in \Gamma (\hat{f}\wedge V^{\perp})$, $[Q_{Z}, \textrm{I\!I}_{Z}]=0$ and $ [(\hat{\beta}_{0})_{Z},\textrm{I\!I}_{Z}] \in \Gamma (f\wedge V^{\perp})$. Hence, 
\begin{equation}
\label{eqn:QZZZZ}
\mathcal{Q}(Z,Z,Z,Z) = 
2( [\beta_{Z}, \textrm{I\!I}_{Z}],  [(\hat{\beta}_{0})_{Z},\textrm{I\!I}_{Z}]) = 2( [[\beta_{Z}, \textrm{I\!I}_{Z}],\textrm{I\!I}_{Z}],  (\hat{\beta}_{0})_{Z}),
\end{equation}
using that the Killing metric is $ad$-invariant. Assume now that $(z,\bar{z})$ are local null coordinates for $f$ and thus $\frac{\partial}{\partial z} \in \Gamma T^{1,0}\Sigma$ and $\frac{\partial}{\partial \bar{z}} \in \Gamma T^{0,1}\Sigma$. 
Let 
\[ \mathcal{Q}^{4,0} := \mathcal{Q}(\tfrac{\partial}{\partial z},\tfrac{\partial}{\partial z},\tfrac{\partial}{\partial z},\tfrac{\partial}{\partial z}) \quad\text{and}\quad \mathcal{Q}^{0,4} := \mathcal{Q}(\tfrac{\partial}{\partial \bar{z}} ,\tfrac{\partial}{\partial \bar{z}} ,\tfrac{\partial}{\partial \bar{z}} ,\tfrac{\partial}{\partial \bar{z}}) .\]

\begin{lemma}
\label{lem:Qvanish}
Fix $p\in \Sigma$. Then $\mathcal{Q}^{4,0}(p)= 0$ if and only if $\hat{\beta}_{0}^{1,0}(p)=0$ or $\textrm{I\!I}_{\frac{\partial}{\partial z}}V(p)$ is an isotropic subspace of $V^{\perp}(p)\otimes\mathbb{C}$. Similarly, $\mathcal{Q}^{0,4}(p) = 0$ if and only if $\hat{\beta}_{0}^{0,1}(p)=0$ or $\textrm{I\!I}_{\frac{\partial}{\partial \bar{z}}}V(p)$ is an isotropic subspace of $V^{\perp}(p)\otimes\mathbb{C}$.
\end{lemma}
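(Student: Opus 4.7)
The plan is to unwind the already-derived expression \eqref{eqn:QZZZZ} to a scalar identity in which the two alleged vanishing conditions appear as separate factors. Since $U_\pm$ are rank $1$ (null) subbundles and $\hat f, f$ are also rank $1$, I would choose local generators to trivialise the various bivector bundles. Namely, I would pick $e_\pm\in\Gamma U_\pm$ with $(e_+,e_-)=1$ (possible since the conformal pairing is non-degenerate on $U\otimes\mathbb{C}$ and $U_\pm$ are dually paired null lines), together with $F\in\Gamma f$, $\hat F\in\Gamma\hat f$ normalised by $(F,\hat F)=-1$. The structural constraints on $\beta^{1,0}$, $\mathrm{I\!I}^{1,0}$ and $\hat\beta_0^{1,0}$ then force unique local expressions
\[
\beta_{\partial_z}=a\,\hat F\wedge e_+,\qquad \mathrm{I\!I}_{\partial_z}=e_-\wedge N,\qquad (\hat\beta_0)_{\partial_z}=F\wedge v_+,
\]
for some scalar $a$ (nowhere zero, because $f$ is an immersion, so $\beta^{1,0}$ is nowhere zero), some $N\in\Gamma(V^\perp\otimes\mathbb{C})$, and some $v_+\in\Gamma(U_+\otimes\mathbb{C})$.

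Next I would evaluate the two nested brackets using the standard formula
\[
[a\wedge b,\,c\wedge d]=(b,c)\,a\wedge d-(a,c)\,b\wedge d-(b,d)\,a\wedge c+(a,d)\,b\wedge c,
\]
systematically exploiting the orthogonality of the summands in the splitting \eqref{eqn:weylsplit}. Because $\hat F\perp U$, $\hat F\perp V^\perp$, $U\perp V^\perp$, and $U_\pm$ are isotropic, the first bracket collapses to
\[
[\beta_{\partial_z},\mathrm{I\!I}_{\partial_z}]=a(e_+,e_-)\,\hat F\wedge N=a\,\hat F\wedge N,
\]
and then a second application, using additionally that $N\in V^\perp$ and $\hat F\in V$, yields
\[
[[\beta_{\partial_z},\mathrm{I\!I}_{\partial_z}],\mathrm{I\!I}_{\partial_z}]=-a(N,N)\,\hat F\wedge e_-.
\]
Pairing this with $(\hat\beta_0)_{\partial_z}=F\wedge v_+$ via the Killing form $(x\wedge y,z\wedge w)=(x,z)(y,w)-(x,w)(y,z)$ and using $(F,\hat F)=-1$, $(\hat F,v_+)=0$, $(e_-,F)=0$, I obtain
\[
\mathcal{Q}^{4,0}=2\bigl([[\beta_{\partial_z},\mathrm{I\!I}_{\partial_z}],\mathrm{I\!I}_{\partial_z}],(\hat\beta_0)_{\partial_z}\bigr)=2a\,(N,N)\,(e_-,v_+).
\]

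Finally I would interpret the two non-trivial factors. Since $a$ is nowhere zero, $\mathcal{Q}^{4,0}(p)=0$ iff $(N,N)(p)=0$ or $(e_-,v_+)(p)=0$. Using $(e_-\wedge N)(x)=(e_-,x)N-(N,x)e_-$ and noting that $(e_-,\cdot)$ annihilates $\hat f$, $f$ and $U_-$ but not $U_+$, one checks that $\mathrm{I\!I}_{\partial_z}V=\langle N\rangle$ as a subspace of $V^\perp(p)\otimes\mathbb{C}$; hence the first condition is precisely the isotropy of $\mathrm{I\!I}_{\partial_z}V(p)$ (with the zero subspace counted as isotropic). The second condition, since $(e_-,e_+)=1$ and $v_+\in U_+\otimes\mathbb{C}$, is equivalent to $v_+(p)=0$, i.e.\ $\hat\beta_0^{1,0}(p)=0$. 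The statement about $\mathcal{Q}^{0,4}$ follows by the symmetric calculation with $\partial_{\bar z}$ in place of $\partial_z$ and the roles of $U_+$ and $U_-$ interchanged.

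I do not expect a serious obstacle: the only delicate point is bookkeeping the many vanishing pairings in the bracket formula, and checking that the chosen generators can be normalised simultaneously in a neighbourhood of $p$, which is immediate from the rank and non-degeneracy hypotheses on $U$ and on the $(F,\hat F)$ pairing.
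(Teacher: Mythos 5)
Your proof is correct and is essentially the explicit version of the ``straightforward calculation'' that the paper's proof invokes without detail: the paper simply records that $[[\beta_{\partial_z},\mathrm{I\!I}_{\partial_z}],\mathrm{I\!I}_{\partial_z}]$ lies in the rank-one bundle $\hat f\wedge U_-$ and vanishes exactly when $\mathrm{I\!I}_{\partial_z}V$ is isotropic, while $\hat\beta_0^{1,0}$ lies in the dually paired rank-one bundle $f\wedge U_+$, whereas you carry out the same computation in local frames. One small remark: the bracket formula you quote, $[a\wedge b,c\wedge d]=(b,c)a\wedge d-(a,c)b\wedge d-(b,d)a\wedge c+(a,d)b\wedge c$, is the negative of the one compatible with the paper's convention $(a\wedge b)c=(a,c)b-(b,c)a$, but since the bracket is applied twice the sign discrepancy cancels and your final expression $\mathcal{Q}^{4,0}=2a\,(N,N)\,(e_-,v_+)$ and its interpretation are correct.
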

\begin{proof}
A straightforward calculation shows that $[[\beta_{\frac{\partial}{\partial z}}, \textrm{I\!I}_{\frac{\partial}{\partial z}}],\textrm{I\!I}_{\frac{\partial}{\partial z}}] \in \Gamma (\hat{f}\wedge U_{-})$ and vanishes if and only if $\textrm{I\!I}_{\frac{\partial}{\partial z}}V$ is an isotropic subbundle of $V^{\perp}\otimes\mathbb{C}$. Since $\hat{\beta}^{1,0}_{0}\in \Omega^{1,0}(f\wedge U_{+})$, the first part of the result follows from~\eqref{eqn:QZZZZ}. An analogous analysis can be applied to $\mathcal{Q}^{0,4}$ to complete the proof. 
\end{proof}

Recalling Definition~\ref{def:superconformal} we arrive at the following proposition:

\begin{proposition}
Suppose that $f$ is nowhere half-superconformal, nowhere superconformal and that the central sphere congruence of $f$ admits a second envelope. Then $\mathcal{Q} \equiv 0$ if and only if $\hat{\beta}_{0}=0$. 
\end{proposition}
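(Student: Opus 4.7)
The plan is to apply Lemma~\ref{lem:Qvanish} pointwise after unpacking what the combined non-superconformality hypotheses mean. By Definition~\ref{def:superconformal}, a point $p \in \Sigma$ is superconformal when both $(\mathrm{I\!I}^{1,0}V)(p)$ and $(\mathrm{I\!I}^{0,1}V)(p)$ are isotropic, and half-superconformal when exactly one is. Therefore, if $f$ is simultaneously nowhere superconformal and nowhere half-superconformal, then at \emph{every} point $p \in \Sigma$ neither $(\mathrm{I\!I}^{1,0}V)(p)$ nor $(\mathrm{I\!I}^{0,1}V)(p)$ is isotropic. This is the key observation that eliminates one alternative in each half of Lemma~\ref{lem:Qvanish}.

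For the forward implication, suppose $\mathcal{Q} \equiv 0$. Fix an arbitrary $p \in \Sigma$ and a local null coordinate $z$ around $p$, so that $(\mathrm{I\!I}^{1,0}V)(p) = \mathrm{I\!I}_{\partial/\partial z}V(p)$ (up to rescaling) and similarly for the antiholomorphic part. Since $\mathcal{Q}^{4,0}(p)=0$, Lemma~\ref{lem:Qvanish} forces either $\hat{\beta}_{0}^{1,0}(p)=0$ or $\mathrm{I\!I}_{\partial/\partial z}V(p)$ isotropic; the latter is excluded by our assumption, so $\hat{\beta}_{0}^{1,0}(p)=0$. The analogous argument applied to $\mathcal{Q}^{0,4}(p)=0$ gives $\hat{\beta}_{0}^{0,1}(p)=0$. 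As $p$ was arbitrary, $\hat{\beta}_{0} = \hat{\beta}_{0}^{1,0}+\hat{\beta}_{0}^{0,1} \equiv 0$.

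For the converse, assume $\hat{\beta}_{0} \equiv 0$. Then $\hat{\beta}_{0}^{1,0} = \hat{\beta}_{0}^{0,1} = 0$ identically, so both halves of Lemma~\ref{lem:Qvanish} are automatically satisfied at every point, giving $\mathcal{Q}^{4,0} \equiv 0$ and $\mathcal{Q}^{0,4} \equiv 0$, hence $\mathcal{Q} \equiv 0$. (Alternatively, one reads this directly from the formula~\eqref{eqn:QZZZZ}, whose right-hand side is manifestly linear in $\hat{\beta}_{0}$.)

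There is no real obstacle: the content is already packaged into Lemma~\ref{lem:Qvanish}, and the non-superconformality hypotheses are designed precisely to rule out the isotropic alternative in that lemma. The only point requiring a moment of care is the logical parsing of ``nowhere half-superconformal and nowhere superconformal'' as the pointwise statement that neither isotropy condition ever holds; once this is noted, the argument is immediate.
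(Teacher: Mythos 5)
Your proof is correct and is precisely the argument the paper leaves implicit (the paper offers no written proof beyond the cue ``Recalling Definition~\ref{def:superconformal}''): combining nowhere superconformal with nowhere half-superconformal to conclude that at every point neither $(\mathrm{I\!I}^{1,0}V)(p)$ nor $(\mathrm{I\!I}^{0,1}V)(p)$ is isotropic, then reading Lemma~\ref{lem:Qvanish} pointwise to equate vanishing of $\mathcal{Q}^{4,0}$ (resp.\ $\mathcal{Q}^{0,4}$) with vanishing of $\hat{\beta}_{0}^{1,0}$ (resp.\ $\hat{\beta}_{0}^{0,1}$). The converse observation via the linearity of~\eqref{eqn:QZZZZ} in $\hat{\beta}_{0}$ is a fine shortcut but not needed.
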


Proposition~\ref{prop:minimal}, Proposition~\ref{prop:excquasi} and Proposition~\ref{prop:CMCholo} then yield the following corollary: 

\begin{corollary}
Suppose that $f$ is nowhere half-superconformal and that the central sphere congruence of $f$ admits a second envelope. Then $\mathcal{Q}\equiv 0$ if and only if, away from a nowhere dense subset of $\Sigma$, $f$ is either 
\begin{itemize}
\item superconformal
\item a minimal surface in $\mathbb{R}^{p,q}$, 
\item an exceptional quasi-umbilical surface, or 
\item a CMC-$1$ surface in a 3-dimensional space form with sectional curvature $\kappa = -1$. 
\end{itemize}
\end{corollary}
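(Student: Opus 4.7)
The plan is to combine the preceding proposition with Propositions~\ref{prop:minimal}, \ref{prop:excquasi}, and \ref{prop:CMCholo} by decomposing $\Sigma$ according to where $f$ is or is not superconformal. First, I would observe that by Lemma~\ref{lem:Qvanish}, $\mathcal{Q}$ vanishes at every superconformal point regardless of $\hat{\beta}_{0}$. Let $\Sigma_{s}$ denote the interior of the superconformal locus and $\Sigma_{ns}$ the interior of its complement. The hypothesis that $f$ is nowhere half-superconformal forces, at every point, either both $\textrm{I\!I}^{1,0}V$ and $\textrm{I\!I}^{0,1}V$ to be isotropic or neither to be isotropic; hence the superconformal locus and its complement are closed, and their interiors cover $\Sigma$ off a nowhere dense subset. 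Applying the preceding proposition on $\Sigma_{ns}$ (which is simultaneously nowhere half-superconformal and nowhere superconformal), $\mathcal{Q}\equiv 0$ there is equivalent to $\hat{\beta}_{0}\equiv 0$, while on $\Sigma_{s}$ the conclusion ``$f$ is superconformal'' holds by construction.

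Next, I would decompose $\Sigma_{ns}$ further by the pointwise type of the divergence-free quadratic differential $q$. By the remark following Lemma~\ref{lem:QIIvan}, off a nowhere dense subset of $\Sigma_{ns}$ each point has a neighbourhood on which $q$ has constant type: identically zero, degenerate but non-zero, or non-degenerate. With $\hat{\beta}_{0}=0$ in force on $\Sigma_{ns}$, Proposition~\ref{prop:minimal} then identifies $f$ on the first open piece as a minimal surface in a pseudo-Euclidean space form $\mathbb{R}^{p,q}$, Proposition~\ref{prop:excquasi} identifies $f$ on the second as an exceptional quasi-umbilical surface, and Proposition~\ref{prop:CMCholo} identifies $f$ on the third as a CMC-$1$ surface in a 3-dimensional space form of sectional curvature $\kappa=-1$. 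The hypothesis $(\textrm{I\!I},\textrm{I\!I})\not\equiv 0$ needed for Proposition~\ref{prop:CMCholo} is automatic on $\Sigma_{ns}$, because there neither $\textrm{I\!I}^{1,0}V$ nor $\textrm{I\!I}^{0,1}V$ is isotropic.

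The converse is a direct check. If $f$ is superconformal on some open set, then Lemma~\ref{lem:Qvanish} gives $\mathcal{Q}\equiv 0$ there; in each of the remaining three cases the corresponding proposition provides $\hat{\beta}_{0}=0$, which together with formula~\eqref{eqn:QZZZZ} forces $\mathcal{Q}\equiv 0$. Gluing these open pieces and invoking continuity across the nowhere dense complement yields $\mathcal{Q}\equiv 0$ on all of $\Sigma$. The principal obstacle is only bookkeeping: arranging the decomposition of $\Sigma$ into $\Sigma_{s}$, the three open sub-pieces of $\Sigma_{ns}$ (indexed by the type of $q$), and a nowhere dense remainder, in such a way that each of the earlier propositions is invoked on a genuinely open subset of its domain of applicability.
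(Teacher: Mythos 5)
Your proposal is correct and follows essentially the same route as the paper, which presents this corollary as a direct consequence of the preceding proposition (giving $\mathcal{Q}\equiv 0 \Leftrightarrow \hat{\beta}_{0}=0$ on the nowhere-superconformal part) combined with Propositions~\ref{prop:minimal}, \ref{prop:excquasi} and \ref{prop:CMCholo}, with the decomposition by superconformal locus and by the type of the divergence-free quadratic differential $q$ left implicit; you have merely made that bookkeeping explicit, and your justification that $(\textrm{I\!I},\textrm{I\!I})\not\equiv 0$ holds on the nowhere-superconformal part is indeed the contrapositive of the paper's observation that $(\textrm{I\!I},\textrm{I\!I})\equiv 0$ forces superconformality.

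One small slip in the prose: you assert that ``the superconformal locus and its complement are closed''; under the nowhere half-superconformal hypothesis the superconformal locus $S$ (where both $\textrm{I\!I}^{1,0}V$ and $\textrm{I\!I}^{0,1}V$ are isotropic) is closed, but its complement (where neither is isotropic) is \emph{open}, not closed --- otherwise both would be clopen. This does not damage the argument: all that is needed is that $\partial S$ is nowhere dense, which holds because $S$ is closed, so $\operatorname{int}(S)$ and $\Sigma\setminus S$ do cover $\Sigma$ off $\partial S$ as you conclude.
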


$\mathcal{Q}$ is divergence free if and only if $\mathcal{Q}^{4,0}_{\bar{z}} =\mathcal{Q}^{0,4}_{z}=0$. From~\eqref{eqn:Dbeta} and~\eqref{eqn:DII} we have that 
\[ D_{\frac{\partial}{\partial \bar{z}}}\beta_{\frac{\partial}{\partial z}} = D_{\frac{\partial}{\partial \bar{z}}}
\textrm{I\!I}_{\frac{\partial}{\partial z}}=0.\]
Hence, 
\begin{equation}
\label{eqn:Q40barz}
 \mathcal{Q}^{4,0}_{\bar{z}} = 2( [[\beta_{\frac{\partial}{\partial z}}, \textrm{I\!I}_{\frac{\partial}{\partial z}}],\textrm{I\!I}_{\frac{\partial}{\partial z}}],D_{\frac{\partial}{\partial \bar{z}}}((\hat{\beta}_{0})_{\frac{\partial}{\partial z}})).
\end{equation}
Similarly, we have that 
 \[\mathcal{Q}^{0,4}_{z} = 2( [[\beta_{\frac{\partial}{\partial \bar{z}}}, \textrm{I\!I}_{\frac{\partial}{\partial \bar{z}}}],\textrm{I\!I}_{\frac{\partial}{\partial \bar{z}}}],D_{\frac{\partial}{\partial z}}((\hat{\beta}_{0})_{\frac{\partial}{\partial \bar{z}}})).\]
Applying similar arguments as those in the proof of Lemma~\ref{lem:Qvanish}, we have that:

\begin{lemma}
\label{lem:Qholo}
Fix $p\in \Sigma$. $\mathcal{Q}^{4,0}_{\bar{z}}(p) = 0$ if and only if $D_{\frac{\partial}{\partial \bar{z}}}((\hat{\beta}_{0})_{\frac{\partial}{\partial z}})(p) = 0$ or $\textrm{I\!I}_{\frac{\partial}{\partial z}}V(p)$ is an isotropic subspace of $V^{\perp}(p)\otimes\mathbb{C}$. Similarly, $\mathcal{Q}^{0,4}_{z} (p)= 0$ if and only if $D_{\frac{\partial}{\partial z}}((\hat{\beta}_{0})_{\frac{\partial}{\partial \bar{z}}}) (p)= 0$ or $\textrm{I\!I}_{\frac{\partial}{\partial \bar{z}}}V(p)$ is an isotropic subspace of $V^{\perp}(p)\otimes\mathbb{C}$.
\end{lemma}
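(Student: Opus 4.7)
The plan is to run the same bundle-type bookkeeping that underlies the proof of Lemma~\ref{lem:Qvanish}, now applied to the differentiated formula~\eqref{eqn:Q40barz}. Concretely, $\mathcal{Q}^{4,0}_{\bar z}$ is a Killing pairing of two quantities, so it suffices to pin down a rank-one subbundle in which each factor lies, to determine when each factor vanishes, and to check that the pairing between those two subbundles is non-degenerate wherever both factors are non-zero.

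First, I would reuse the observation from the proof of Lemma~\ref{lem:Qvanish} that the triple bracket $[[\beta_{\frac{\partial}{\partial z}},\mathrm{I\!I}_{\frac{\partial}{\partial z}}],\mathrm{I\!I}_{\frac{\partial}{\partial z}}]$ is a section of $\hat f\wedge U_-$ and vanishes at a point $p$ exactly when $\mathrm{I\!I}_{\frac{\partial}{\partial z}}V(p)$ is isotropic in $V^{\perp}(p)\otimes\mathbb{C}$. For the other factor, I would note that $\hat\beta_0^{1,0}\in\Omega^{1,0}(f\wedge U_+)$, so $(\hat\beta_0)_{\frac{\partial}{\partial z}}$ is a section of $f\wedge U_+$; because $D$ preserves $f$ and $U_+$ is $D$-parallel, the derivative $D_{\frac{\partial}{\partial\bar z}}\bigl((\hat\beta_0)_{\frac{\partial}{\partial z}}\bigr)$ still takes values in $f\wedge U_+$.

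Finally, I would compute the Killing pairing between a general element $\hat F\wedge X\in\hat f\wedge U_-$ and $F\wedge Y\in f\wedge U_+$. Using $(a\wedge b,c\wedge d)=(a,c)(b,d)-(a,d)(b,c)$, the orthogonalities $\hat f\perp U$ and $f\perp U$ kill the two cross terms, leaving $(\hat F,F)(X,Y)$. Since $(\hat F,F)\neq 0$ by the Weyl normalisation, and since $U_+$, $U_-$ are distinct complex null lines in the rank-two non-degenerate bundle $U\otimes\mathbb{C}$ and therefore pair non-degenerately, this quantity vanishes at $p$ if and only if $X(p)=0$ or $Y(p)=0$. Combining the three steps gives exactly the claim for $\mathcal{Q}^{4,0}_{\bar z}(p)=0$, and the $\mathcal{Q}^{0,4}_z(p)=0$ case is literally the same argument with $z$ and $\bar z$ swapped and $U_+,U_-$ exchanged. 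There is no real obstacle; the only subtle input is that $D$ preserves the relevant subbundles, which is recorded in Section~\ref{sec:confgeom}, and the remainder is fibrewise linear algebra.
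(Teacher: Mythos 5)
Your proposal is correct, and it is essentially the argument the paper has in mind when it says ``applying similar arguments as those in the proof of Lemma~\ref{lem:Qvanish}.'' You correctly identify the two factors as living in the rank-one subbundles $\hat f\wedge U_-$ and $f\wedge U_+$ respectively (the second using that $D$ preserves $f$ and that $U_+$ is $D$-parallel), and your explicit computation of the Killing pairing between these two line bundles, showing it is non-degenerate, is exactly the linear-algebra point that makes the ``if and only if'' work.
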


\begin{theorem}
Suppose that $f$ is nowhere half-superconformal and nowhere superconformal. Then $f$ is a Voss surface if and only if $f$ is an orthogonal surface of a harmonic congruence of codimension 2 spheres. 
\end{theorem}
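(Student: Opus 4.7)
The plan is to reduce both sides of the biconditional to a single equivalent statement, namely that $V$ admits a second envelope and $d^{D}\hat\beta_{0}=0$. Lemma~\ref{lem:harmcond} already tells us that orthogonality to a harmonic codimension 2 sphere congruence is equivalent to $\hat f$ enveloping $V$ together with $d^{D}Q=0$ and $d^{D}\hat\beta_{0}=0$. The definition of a Voss surface asks for $\hat f$ enveloping $V$ and for $\mathcal{Q}$ to be divergence free. So the whole theorem rests on establishing, under the genericity hypotheses, that $\mathcal{Q}$ divergence free is equivalent to $d^{D}\hat\beta_{0}=0$, with the Codazzi equation~\eqref{eqn:DQDhatbeta0AII} automatically promoting this to $d^{D}Q=0$ when $A=0$.

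The first main step is to use Lemma~\ref{lem:Qholo} to translate the vanishing of $\mathcal{Q}^{4,0}_{\bar z}$ and $\mathcal{Q}^{0,4}_{z}$. Each condition gives a dichotomy: either a certain $D$-derivative of $\hat\beta_{0}$ vanishes, or $\mathrm{I\!I}_{\partial_{z}}V$, respectively $\mathrm{I\!I}_{\partial_{\bar z}}V$, is isotropic. The combined hypothesis that $f$ is nowhere superconformal and nowhere half-superconformal is, by Definition~\ref{def:superconformal}, exactly the statement that at every point neither $\mathrm{I\!I}^{1,0}V$ nor $\mathrm{I\!I}^{0,1}V$ is isotropic, so the isotropy alternatives are ruled out pointwise. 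Hence $\mathcal{Q}$ divergence free is equivalent to $D_{\partial_{\bar z}}((\hat\beta_{0})_{\partial_{z}})=0$ and $D_{\partial_{z}}((\hat\beta_{0})_{\partial_{\bar z}})=0$.

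The second step is to identify these two vanishings with the single equation $d^{D}\hat\beta_{0}=0$. Since $f$, $U_{+}$ and $U_{-}$ are $D$-parallel subbundles, $(\hat\beta_{0})_{\partial_{z}}$ takes values in $f\wedge U_{+}$ while $(\hat\beta_{0})_{\partial_{\bar z}}$ takes values in $f\wedge U_{-}$, and the two derivatives above are the $f\wedge U_{+}$- and $f\wedge U_{-}$-components of $d^{D}\hat\beta_{0}$ (there are no $(2,0)$ or $(0,2)$ contributions on a surface). Since these components live in distinct subbundles of $\underline{\mathbb{R}}^{p+1,q+1}\otimes\mathbb{C}$, their joint vanishing is equivalent to $d^{D}\hat\beta_{0}=0$. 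Given this equivalence, the theorem is immediate: $(\Leftarrow)$ Lemma~\ref{lem:harmcond} yields both the second envelope and $d^{D}\hat\beta_{0}=0$, hence $\mathcal{Q}$ is divergence free; $(\Rightarrow)$ the Voss hypothesis provides a second envelope (whence $A=0$ by Remark~\ref{rem:Aobs}) and $d^{D}\hat\beta_{0}=0$, and combining these with \eqref{eqn:DQDhatbeta0AII} forces $d^{D}Q=0$, so Lemma~\ref{lem:harmcond} identifies $U^{\perp}$ arising from the decomposition $V=(f\oplus\hat f)\oplus_{\perp}U$ as a harmonic codimension 2 sphere congruence to which $f$ is orthogonal.

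The principal obstacle is the component-by-component identification in the second step: one must verify that the two scalar conditions produced by Lemma~\ref{lem:Qholo} are in fact independent, and that together they assemble exactly into $d^{D}\hat\beta_{0}=0$ rather than some weaker partial vanishing. This rests on the structural facts that $f\wedge U_{+}$ and $f\wedge U_{-}$ are distinct $D$-parallel null subbundles and that the Hodge decomposition of two-forms on a surface has no $(2,0)$ or $(0,2)$ piece; once this bookkeeping is in place the remainder of the argument is a direct invocation of Lemma~\ref{lem:harmcond}, Lemma~\ref{lem:Qholo} and the Codazzi identity~\eqref{eqn:DQDhatbeta0AII}.
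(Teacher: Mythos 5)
Your proposal is correct and follows essentially the same route as the paper's proof: invoke Lemma~\ref{lem:Qholo}, use the nowhere (half-)superconformal hypothesis to discard the isotropy alternatives pointwise, package the two vanishing $D$-derivatives as $d^{D}\hat\beta_{0}=0$, then use $A\equiv 0$ together with \eqref{eqn:DQDhatbeta0AII} to upgrade this to $d^{D}Q=0$ and conclude via Lemma~\ref{lem:harmcond}. Your write-up is somewhat more explicit than the paper's (in particular, you spell out why the two $D$-derivative conditions assemble exactly into $d^{D}\hat\beta_{0}=0$ using $D$-parallelity of $f\wedge U_{\pm}$ and the absence of $(2,0)/(0,2)$ two-forms on a surface, and you are careful that $U$ is the orthocomplement of $f\oplus\hat f$ in $V$ rather than $f\oplus\hat f$ itself), but the substance is the same.
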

\begin{proof}
Suppose that $\hat{f}$ is a second envelope of the central sphere congruence $V$ of $f$ and define $U:=f\oplus \hat{f}$. By Lemma~\ref{lem:Qholo}, $\mathcal{Q}$ is divergence free if and only if 
\[D_{\frac{\partial}{\partial \bar{z}}}((\hat{\beta}_{0})_{\frac{\partial}{\partial z}}) = D_{\frac{\partial}{\partial z}}((\hat{\beta}_{0})_{\frac{\partial}{\partial \bar{z}}}) = 0,\]
i.e., $d^{D}\hat{\beta}_{0} = 0$. Since $A \equiv 0$, we learn from~\eqref{eqn:DQDhatbeta0AII} that $d^{D}Q = 0$ if and only if $d^{D}\hat{\beta}_{0} = 0$. The result then follows from Lemma~\ref{lem:harmcond}. 
\end{proof}

We may now apply Theorem~\ref{thm:orthharmonic} to obtain a characterisation of Voss surfaces: 

\begin{corollary}
Nowhere half-superconformal Voss surfaces in the $(p,q)$-sphere are characterised, away from a nowhere dense subset of their domain, as either 
\begin{itemize}
\item superconformal surfaces 
\item $S$-Willmore surfaces,
\item quasi-umbilical surfaces, or
\item constant mean curvature surfaces in 3-dimensional space forms. 
\end{itemize}
\end{corollary}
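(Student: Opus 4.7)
The plan is to combine the preceding theorem with Theorem~\ref{thm:orthharmonic} after stratifying $\Sigma$ by superconformality. Let $\Sigma_{s}\subseteq\Sigma$ denote the set of superconformal points of $f$; by Definition~\ref{def:superconformal} this is closed, so its topological boundary $\partial\Sigma_{s}$ is nowhere dense in $\Sigma$. Write $\Sigma_{ns}:=\Sigma\setminus\Sigma_{s}$, which is open. It suffices to establish the characterisation separately on each of the disjoint open sets $\mathrm{int}(\Sigma_{s})$ and $\Sigma_{ns}$, absorbing $\partial\Sigma_{s}$ into the exceptional nowhere dense subset.

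On $\mathrm{int}(\Sigma_{s})$ the surface is superconformal by definition, which supplies the first category in the list. On $\Sigma_{ns}$ the surface is nowhere superconformal by construction and, by hypothesis, also nowhere half-superconformal, while the Voss property (existence of a second envelope of $V$ together with $\mathcal{Q}$ being divergence free) is local and thus inherited. The theorem immediately preceding the corollary therefore applies, so that $f|_{\Sigma_{ns}}$ is an orthogonal surface of a harmonic codimension 2 sphere congruence. Invoking Theorem~\ref{thm:orthharmonic} then yields, away from a further nowhere dense subset of $\Sigma_{ns}$, one of four possibilities: $S$-Willmore, quasi-umbilical, CMC in a 3-dimensional space form, or constant lightcone mean curvature in a 3-dimensional lightcone.

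The step that trims Theorem~\ref{thm:orthharmonic}'s four cases down to the three non-superconformal items in the corollary is the observation that the last of these is incompatible with $\Sigma_{ns}$. Indeed, by the proposition immediately preceding the corollary, surfaces projecting to constant lightcone mean curvature surfaces in a 3-dimensional lightcone satisfy $(\textrm{I\!I},\textrm{I\!I})\equiv 0$. Since in that situation $\textrm{I\!I}V$ is a rank one null subbundle of $V^{\perp}$, both $\textrm{I\!I}^{1,0}V$ and $\textrm{I\!I}^{0,1}V$ are isotropic, so by Definition~\ref{def:superconformal} $f$ would be superconformal on an open set, contradicting $\Sigma_{ns}\cap\Sigma_{s}=\emptyset$. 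Combining the two strata yields the required four-item characterisation.

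The only delicate aspect is bookkeeping: one must check that the Voss hypothesis genuinely restricts to the open subset $\Sigma_{ns}$, that composing two successive "away from a nowhere dense subset" statements still gives a nowhere dense exceptional set (the union of $\partial\Sigma_{s}$ with the nowhere dense subset arising from Theorem~\ref{thm:orthharmonic}), and that the exclusion of the lightcone case on $\Sigma_{ns}$ really follows from the earlier propositions. None of these points is technically difficult, so the argument reduces to a clean assembly of results already in hand.
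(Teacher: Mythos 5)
Your proof is correct and follows the paper's intended route: the paper merely remarks ``We may now apply Theorem~\ref{thm:orthharmonic}'', having already observed just before the lightcone proposition that $(\textrm{I\!I},\textrm{I\!I})\equiv 0$ forces $f$ to be superconformal, so the lightcone case in Theorem~\ref{thm:orthharmonic} is absorbed into the superconformal item. You make explicit the stratification into $\mathrm{int}(\Sigma_s)$ and $\Sigma_{ns}$, the locality of the Voss condition, and the nowhere-dense bookkeeping, all of which the paper leaves to the reader; the underlying argument is the same.
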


\bibliographystyle{plain}
\bibliography{bibliography2017}

\begin{thebibliography}{10}

\bibitem{B1929}
W.~Blaschke.
\newblock {\em Vorlesungen {\"u}ber Differentialgeometrie {III}}.
\newblock Springer Grundlehren XXIX, Berlin, 1929.

\bibitem{B2012}
C.~Bohle.
\newblock Constant mean curvature tori as stationary solutions to the
  {D}avey-{S}tewartson equation.
\newblock {\em Math. {Z}.}, 271(1--2):489--498, 2012.

\bibitem{BP2009}
C.~Bohle and G.P. Peters.
\newblock Bryant surfaces with smooth ends.
\newblock {\em Commun. Anal. Geom.}, 17(4):587--619, 2009.

\bibitem{BPP2008}
C.~Bohle, G.P. Peters, and U.~Pinkall.
\newblock Constrained {W}illmore surfaces.
\newblock {\em Calc. Var. Partial Differ. Equ.}, 32(2):263--277, 2008.

\bibitem{B1984}
R.L. Bryant.
\newblock A duality theorem for {W}illmore surfaces.
\newblock {\em J. Differ. Geom.}, 20:23--54, 1984.

\bibitem{BC2010i}
F.~E. Burstall and D.~M.~J. Calderbank.
\newblock {Conformal submanifold geometry {I-III}}.
\newblock {\em ArXiv e-prints}, June 2010.

\bibitem{BDPP2011}
F.~E. Burstall, N.~M. Donaldson, F.~Pedit, and U.~Pinkall.
\newblock Isothermic submanifolds of symmetric {$R$}-spaces.
\newblock {\em J. Reine Angew. Math.}, 660:191--243, 2011.

\bibitem{BFLPP2001}
F.~E. Burstall, D.~Ferus, K.~Leschke, F.~Pedit, and U.~Pinkall.
\newblock {\em Conformal geometry of surfaces in the {$4$}-sphere and
  quaternions}, volume 1772 of {\em Lect. Notes Math.}
\newblock Springer-Verlag, Heidelberg, Berlin, New York, 2001.

\bibitem{BPP2002}
F.~E. Burstall, F.~Pedit, and U.~Pinkall.
\newblock Schwarzian derivatives and flows of surfaces.
\newblock {\em Contemp. Math}, 308:39--61, 2002.

\bibitem{BQ2014}
F.~E. Burstall and A.~C. Quintino.
\newblock Dressing transformations of constrained {W}illmore surfaces.
\newblock {\em Commun. Anal. Geom.}, 22(3):469--518, 2014.

\bibitem{BS2012}
F.~E. Burstall and S.~D. Santos.
\newblock Special isothermic surfaces of type {$d$}.
\newblock {\em J. Lond. Math. Soc.}, 85(2):571--591, 2012.

\bibitem{C2012iii}
J.~Clelland.
\newblock Totally quasi-umbilic timelike surfaces in {$\mathbb{R}^{1,2}$}.
\newblock {\em Asian J. Math.}, 16(2):189--208, 2012.

\bibitem{DT2009}
M.~Dajczer and R.~Tojeiro.
\newblock All superconformal surfaces in {$\mathbb{R}^{4}$} in terms of minimal
  surfaces.
\newblock {\em Math. Z.}, 261:869--890, 2009.

\bibitem{DV2014}
M.~Dajczer and T.~Vlachos.
\newblock The dual superconformal surface.
\newblock {\em Ann. Global Anal. Geom.}, 48(1):1--22, 2014.

\bibitem{DV2015}
M.~Dajczer and T.~Vlachos.
\newblock A class of superconformal surfaces.
\newblock {\em Ann. Mat. Pura Appl.}, 194(6):1607--1618, 2015.

\bibitem{E1914i}
L.~P. Eisenhart.
\newblock Transformations of surfaces of {V}oss.
\newblock {\em Trans. Amer. Math. Soc.}, 15:245?265, 1914.

\bibitem{E1988}
N.~Ejiri.
\newblock Willmore surfaces with a duality in ${S}^{n}$.
\newblock {\em Proc. London Math. Soc}, 3(57):383--416, 1988.

\bibitem{H1998}
F.~H{\'{e}}lein.
\newblock Willmore immersions and loop groups.
\newblock {\em J. Differ. Geom.}, 50:331--385, 1998.

\bibitem{H1996}
U.~Hertrich-Jeromin.
\newblock On conformally flat hypersurfaces, curved flats and cyclic systems.
\newblock {\em manuscripta math.}, 91:455--466, 1996.

\bibitem{H2003}
U.~Hertrich-Jeromin.
\newblock {\em Introduction to {M}\"obius differential geometry}, volume 300 of
  {\em London Mathematical Society Lecture Note Series}.
\newblock Cambridge University Press, Cambridge, 2003.

\bibitem{HMN2001}
U.~Hertrich-Jeromin, E.~Musso, and L~Nicolodi.
\newblock M{\"o}bius geometry of surfaces of constant mean curvature 1 in
  hyperbolic space.
\newblock {\em Ann. Glob. Anal. Geom.}, 19:185--205, 2001.

\bibitem{I2009}
S.~Izumiya.
\newblock Legendrian dualities and spacelike hypersurfaces in the lightcone.
\newblock {\em Mosc. Math. J.}, 9:325--357, 2009.

\bibitem{JMN2016}
G.~R. Jensen, E.~Musso, and L.~Nicolodi.
\newblock {\em Surfaces in classical geometries. {A} treatment by moving
  frames}.
\newblock Universitext. Springer, New York, 2016.

\bibitem{M2006}
X.~Ma.
\newblock Adjoint transforms of {W}illmore surfaces in {$\mathbb{S}^{n}$}.
\newblock {\em Manuscripta Math.}, 120(2):163--179, 2006.

\bibitem{MWW2017}
X.~Ma, C.~Wang, and P.~Wang.
\newblock Classification of {W}illmore two-spheres in the 5-dimensional sphere.
\newblock {\em J. Differential Geom.}, 106(2):245--281, 2017.

\bibitem{MN2016}
E.~Musso and L.~Nicolodi.
\newblock Holomorphic differentials and {L}aguerre deformation of surfaces.
\newblock {\em Math. Z.}, 284(3-4):1089--1110, 2016.

\bibitem{MPwip}
E.~Musso and M.~Pember.
\newblock Conformal geometry of quasi-umbilical time-like surfaces.
\newblock Work in progress.

\bibitem{R1997}
J.~Richter.
\newblock {\em Conformal maps of a Riemann surface into the space of
  quaternions}.
\newblock PhD thesis, TU Berlin, 1997.

\bibitem{R1987}
M.~Rigoli.
\newblock The conformal {G}auss map of submanifolds of the {M}{\"{o}}bius
  space.
\newblock {\em Ann. Global Anal. Geom.}, 5(2):97--116, 1987.

\bibitem{T2012}
D.~The.
\newblock Conformal geometry of surfaces in the {L}agrangian-{G}rassmannian and
  second-order {PDE}.
\newblock {\em Proc. London Math. Soc.}, 104(1):79--122, 2012.

\bibitem{V1985}
K.~Voss.
\newblock Verallgemeinerte {W}illmore-{F}l{\"a}chen.
\newblock {\em Tech. Rep., Mathematisches Forschungsinstitut Oberwolfach
  Workshop Report}, 42, 1985.

\end{thebibliography}

\end{document}